\documentclass[11pt]{article}

\usepackage{times}
\usepackage{float}
\usepackage{bbm}
\usepackage{amsfonts,amstext,amsmath,amssymb,amsthm}
\usepackage{mathrsfs}
\usepackage{graphicx}
\usepackage{epstopdf}
\usepackage[hang]{caption2}

\renewcommand{\captionlabelfont}{\bfseries\it}
\renewcommand{\captionlabelfont}{\bfseries\emph} 
\long\def\symbolfootnote[#1]#2{\begingroup
\def\thefootnote{\fnsymbol{footnote}}\footnote[#1]{#2}\endgroup}
\topmargin=-0.625in
\headsep=0.5in
\oddsidemargin=0in
\evensidemargin=0in
\textwidth=6.5in
\textheight=9in

\usepackage{titlesec} 

\titleformat{\section}{\large\bfseries\uppercase}{\thesection.}{1em}{}
\titlespacing*{\section}{0pt}{*3}{*2}
\titleformat{\subsection}{\normalfont\bfseries}{\thesubsection.}{.5em}{}
\titlespacing*{\subsection}{0pt}{*3}{*2}
\titleformat{\subsubsection}{\normalfont\bfseries}{\thesubsubsection.}{.5em}{}
\titlespacing*{\subsubsection}{0pt}{*3}{*2}

\usepackage{nameref}

\usepackage[%
    colorlinks=true,%
    bookmarksnumbered=true,%
    bookmarksopen=true,%
    citecolor=blue,%
    urlcolor=blue,%
    unicode=true,           
    breaklinks=true,                                                        
    pdftitle={Nearly Minimax Mixture Rules for One-sided Sequential Testing},
    pdfauthor={Georgios Fellouris and  Alexander G. Tartakovsky},
]{hyperref}

\usepackage[authoryear]{natbib}

\numberwithin{equation}{section}

\newcommand{\cJ}{\mathcal{J}}

\newcommand{\cL}{\mathcal{L}}
\newcommand{\cF}{\mathcal{F}}

\newcommand{\cB}{\mathcal{B}}

\newcommand{\cH}{\mathcal{H}}
\newcommand{\cP}{\mathcal{P}}
\newcommand{\cR}{\mathcal{R}}
\newcommand{\cca}{\mathcal{C}_{\alpha}}
\newcommand{\ccab}{\mathcal{C}_{\alpha, \beta}}

\newcommand{\Fc}{{ \mathscr{F}}}

\newcommand{\Exp}{{\sf E}}
\newcommand{\Expop}{{\mathbf{E}}}

\newcommand{\Pro}{{\sf P}}
\newcommand{\Prop}{{\mathbf{P}}}
\newcommand{\Hyp}{{\sf H}}

\newcommand{\set}[1]{\left\{#1\right\}}
\newcommand{\brc}[1]{\left(#1\right)}
\newcommand{\brcs}[1]{\left[#1\right]}

\theoremstyle{plain} 
\newtheorem{theorem}{Theorem}[section]
\newtheorem{lemma}{Lemma}[section]
\newtheorem{corollary}{Corollary}[section]

\theoremstyle{remark}
\newtheorem{remark}{Remark}[section]
\newtheorem*{remark*}{Remark}

\theoremstyle{definition}

\newtheorem*{definition*}{Definition}

\newtheorem{condition}{Condition}

\newcommand{\ind}[1]{\mathbbm{1}_{\{#1\}}}  
\usepackage{color}
\definecolor{deprecated}{rgb}{0.5,0.5,0.5}

\listfiles
\begin{document}

\title{\textbf{\Large Nearly Minimax One-Sided Mixture-Based Sequential Tests}}  

\date{}
\maketitle

\begin{center}
\null\vskip-2cm\author{
\textbf{\large Georgios Fellouris and  Alexander G. Tartakovsky }\\
Department of Mathematics, University of Southern California\\
Los Angeles, California, USA}
\end{center}

\symbolfootnote[0]{\normalsize \hspace{-0.7cm} Address correspondence to A.G. Tartakovsky, Department of Mathematics, University of Southern California, KAP-108, Los Angeles, CA 90089-2532, USA; E-mail: tartakov@math.usc.edu or G. Fellouris Department of Mathematics, University of Southern California, KAP-108, Los Angeles, CA 90089-2532, USA; E-mail: fellouri@usc.edu.}

{\small \noindent\textbf{Abstract:} We focus on one-sided, mixture-based stopping rules for the problem of sequential testing a simple null hypothesis against a composite alternative. For the latter, we consider two cases---either a discrete alternative  or a continuous alternative that can be embedded into an exponential family. For each case, we find a mixture-based stopping rule that is nearly minimax in the sense of minimizing the maximal Kullback--Leibler information. The proof of this result is based on finding an almost Bayes rule for an appropriate sequential decision problem and on high-order asymptotic approximations for the performance characteristics of arbitrary mixture-based stopping times. We also evaluate the asymptotic performance loss of certain intuitive mixture rules and  verify the  accuracy of our asymptotic approximations with simulation experiments.}

{\small \noindent\textbf{Keywords:} Asymptotic optimality; Minimax tests; Mixtures rules; One-sided sequential tests; Open-ended tests; Power one tests.}
\\

{\small \noindent\textbf{Subject Classifications:} 62L10; 62L15; 60G40.}

\section{INTRODUCTION}

\subsection{Problem Formulation and Literature Review}

Let  $\{X_{n}\}_{n \in \mathbb{N}}$ be a sequence of independent and identically distributed (iid)  observations (generally vectors, $X_n \in \mathbb{R}^d$) whose common distribution under the probability measure $\Pro_{0}$ (the \textit{null} hypothesis $\Hyp_0: \Pro=\Pro_0$) is $F_{0}$.
There is no cost for sampling under $\Pro_{0}$. However sampling should be terminated as soon as possible if there is sufficient evidence against $\Pro_{0}$ and in favor of a class of probability measures $\cP$ (an \textit{alternative} hypothesis $\Hyp: \Pro \in \cP$). The problem is to find an $\{\Fc_{n}\}$-stopping time that takes large values under $\Pro_{0}$  and small values under every probability measure  in $\cP$, where $\Fc_{n}=\sigma(X_{1},\ldots, X_{n})$ is the sigma-algebra generated by the first $n$ observations $X_1,\dots,X_n$, $n \ge 1$. 

When $\cP$ consists  of a single probability measure, say $\cP=\{\Pro_{1}\}$, and  the $\Pro_{1}$-distribution of $X_{1}$, $F_{1}$, is absolutely continuous with respect to $F_{0}$, a definitive solution to this sequential hypothesis testing problem is the \textit{one-sided} Sequential Probability Ratio Test (SPRT)
$$
T^{1}_{A} = \inf \{n \geq 1: \Lambda^{1}_{n} \geq A \}, \quad \inf\{\varnothing\} = \infty,
$$
where $A>1$ is a fixed level (threshold) and  $\{\Lambda_{n}^{1}\}$ is the corresponding likelihood-ratio process, i.e.,
$$
\Lambda_{n}^{1} = \prod_{m=1}^{n} \frac{dF_{1}}{dF_{0}} (X_{m}), \quad n \in \mathbb{N}.
$$
The stopping time $T^{1}_{A}$ is often called an \textit{open-ended} test or a test  of \textit{power one}, because it does not terminate almost surely under $\Pro_{0}$ ($\Pro_{0}(T^{1}_{A}<\infty) \leq 1/A$), whereas  it terminates almost surely under $\cP$, i.e., $\Pro_{1}(T^{1}_{A}<\infty)=1$. Furthermore, it follows from  \citet[pp.\ 107--108]{ChowRobbinsSiegmund-book71} that  if the threshold $A=A_\alpha$ is selected so that $\Pro_0(T_A^1 < \infty)=\alpha$, then 
\begin{equation} \label{optimum}
\Exp_{1}[T^{1}_{A}] =\inf_{T \in \cca} \Exp_{1}[T],
\end{equation}
where $\Exp_1$ denotes expectation with respect to $\Pro_1$ and $\cca=\{T:\Pro_{0}(T<\infty)\leq \alpha\}$ is the class  of stopping times whose ``error probability'' is bounded by $\alpha$, $0 < \alpha < 1$.

When the alternative hypothesis is not simple, there  have been extensions of the one-sided SPRT, but none of them exhibits such an exact optimality property as (\ref{optimum}) under every probability measure associated with the alternative hypothesis $\cP$.  More specifically, suppose that $\cP=\{\Pro_{\theta}\}_{\theta  \in \Theta \backslash \{0\}}$  and that the $\Pro_{\theta}$-distribution of $X_{1}$  belongs to the exponential family
\begin{equation} \label{koop}
\frac{dF_{\theta}(x)}{dF_{0}(x)}= e^{\theta x- \psi_\theta}, \quad    \theta \in \Theta = \{\theta \in \mathbb{R}: \Exp_{0}[e^{\theta X_{1}}]<\infty\} ,
\end{equation}
where $\psi_{\theta}=\log \Exp_{0}[e^{\theta X_{1}}]$. Moreover, let $\Lambda_n^\theta$ be the likelihood ratio of $\Pro_{\theta}$ versus $\Pro_{0}$ based on the first $n$ observations, i.e.,
\begin{equation} \label{LR}
\Lambda_n^\theta =  \prod_{k=1}^n  \frac{d F_\theta(X_k)}{d F_0(X_k)} = \exp \Big\{\theta \, \sum_{k=1}^{n} X_{k}-  n \, \psi(\theta) \Bigr\}, \quad n \in \mathbb{N}
\end{equation} 
and let $I_{\theta}=  \Exp_\theta [ \log \Lambda_{1}^{\theta}]$ denote the Kullback--Leibler divergence of $F_{\theta}$ versus $F_{0}$, where here and in what follows $\Exp_\theta$ stands for expectation with respect to $\Pro_\theta$.

 A natural generalization of the one-sided SPRT is  the threshold stopping time $\inf \{n \geq 1: \Lambda^{\theta_{n}}_{n} \geq A \}$, where 
$\theta_{n}$ is an estimate of the unknown parameter $\theta$ at time $n$. \citet{Lorden-AS73} followed a generalized likelihood ratio approach, where
$\theta_{n}$ is taken to be the maximum likelihood estimator (MLE) of $\theta$ based on the  first $n$ observations (see also \citet{Lai-ss01} for two composite hypotheses and two-sided tests). \citet{RobbinsSiegmund-Berkeley70,RobbinsSiegmund-AS74} followed a non-anticipating estimation approach and considered  $\theta_{n}$ to be a one-step delayed estimator that depends only on the first $n-1$ observations. For the latter approach, we also refer to \citet{PollakYakir-SQA99}, \citet{Pavlov-TPA90}, \citet{DragNov}, and \citet{LordenPollak-AS05}. 

An alternative, mixture-based approach was used by \citet{DarlingRobbins-NAS68} (see also \citet{Robbins-AMS70}), 
where the stopping rule has the form
\begin{equation} \label{rule}
T_{A} = \inf \{n \geq 1: \Lambda_{n} \geq A \}
\end{equation}
with $\{\Lambda_{n}\}$ being a weighted (mixed) likelihood-ratio statistic given by 
\begin{equation} \label{cont}
\Lambda_{n} = \int_{\Theta} \Lambda_n^\theta  \; G(d\theta) \; ,  \quad n \in \mathbb{N}
\end{equation}
and $G$ being an arbitrary distribution function on $\Theta$. Assuming that $G$ has a positive and continuous density with respect to the Lebesgue measure, \citet{PollakSiegmund-AS75} obtained an  asymptotic approximation for $\Exp_{\theta}[T_{A}]$ as $A \rightarrow \infty$. 
Based on this approximation, \citet{Pollak-AS78} proved that if $\alpha=1/A$ and $\bar{\Theta} \subset \Theta$ is an arbitrary, closed, 
finite interval, bounded away from 0, then 
\begin{equation}  \label{polopt}
\inf_{T \in \cca} \;  \sup_{\theta \in \bar{\Theta}}  \;  I_{\theta} \, \Exp_{\theta}[T] \geq |\log \alpha| + \log \sqrt{|\log \alpha|}  +O(1) \quad \text{as}~ \alpha \to 0,
\end{equation}
where $O(1)$ is bounded as $\alpha \to 0$, and that this asymptotic lower bound is attained by \textit{any} mixture rule whose mixing distribution has a positive and continuous density  with support that includes $\bar{\Theta}$.  Note that $I_\theta \Exp_\theta[T] = \Exp_\theta[\log \Lambda_T^\theta]$ is the total 
Kullback--Leibler information in the trajectory $X_1^T=(X_1,\dots,X_T)$ in favor of the hypothesis $\Hyp_\theta: \Pro=\Pro_\theta$ versus $\Hyp_0: \Pro=\Pro_0$, so that the problem of minimizing of the maximal value of $I_\theta\Exp_\theta[T]$ can be interpreted as minimizing the Kullback--Leibler information in the least favorable situation.  

\citet{Lerche-AS86} considered the problem of sequential testing for the drift of a Brownian motion in a Bayesian setup.

\subsection{Main Contributions}

One of the goals of this work is to extend the above work on mixture rules. In the framework of exponential families, 
we show that a particular choice of the mixing density leads to a mixture rule $T_A$ that attains $\inf_{T \in \cca} \sup_{\theta \in \bar{\Theta}} \, (I_{\theta} \, \Exp_{\theta}[T])$, not only up to an $O(1)$ term  as in \citet{Pollak-AS78}, but up to an o(1) term  (see Theorem \ref{theo3}). 

However, the main emphasis is on the case that the alternative hypothesis $\cP$ is a finite set, $\cP=\{\Pro_{1}, \ldots, \Pro_{K}\}$. 
In this setup, the weighted likelihood ratio statistic becomes
\begin{equation} \label{disc}
\Lambda_{n}= \sum_{i=1}^{K} p_{i} \, \Lambda_{n}^{i},  \quad n \in \mathbb{N},
\end{equation}
where $\Lambda_{n}^{i}=\prod_{m=1}^{n} [dF_{i}(X_m)/dF_{0}(X_{m})]$,  $F_{i}$ is the $\Pro_{i}$-distribution of $X_{1}$, which is assumed to be absolutely continuous with respect to $F_{0}$, and $\{p_{i}\}$ is a probability mass function, i.e., $p_{i} \geq 0$ for every $i$ and $\sum_{i=1}^{K} p_{i}=1$.  This is a more general framework than that of an exponential family, in that the distributions $F_{i}$ and $F_{0}$ are not required to belong to the same (exponential) parametric family.  Moreover, it can be seen as a discrete approximation to  the continuous setup (\ref{koop}). Such an approximation is necessary in practice, since the continuously weighted likelihood ratio (\ref{cont}) is not usually implementable without such a discretization.  

However, the main motivation for the discrete setup is that it arises naturally in many applications. Consider, for example, the so-called $L$-sample \textit{slippage problem}, where there are $L$ sources of observations (``channels'' or ``populations'')  and there are \textit{two} possibilities for the distribution of each source (\textit{in} and \textit{out of} control). This problem has a variety of important applications, in particular in cybersecurity (see ~\citet{Tartakovskyetal-SM06, Tartakovskyetal-IEEESP06}) and in target detection (see \citet{TarVeerASM2004, Tartakovskyetal-IEEEIT03}). 

Our main contribution in the discrete setup is that we find a mixing distribution $\{p^{0}_{i}\}$ which makes the corresponding mixture test  \textit{nearly minimax} in the sense that it attains $\inf_{T \in \cca} \max_{i} \, (I_{i} \, \Exp_{i}[T])$ up to an $o(1)$ term as $\alpha \rightarrow 0$, where $I_{i}$ is the Kullback--Leibler distance between $F_{i}$ and $F_{0}$ (see Theorem \ref{theo2}). The main components  of the proof are finding a nearly Bayes rule for a decision problem with non-homogeneous sampling costs in $\cP$ and obtaining a high-order
asymptotic expansion for  $\Exp_{i}[T_{A}]$ up to an $o(1)$ term as well as an asymptotic approximation for the ``error probability'' $\Pro_{0}(T_{A}<\infty)$ as $A \rightarrow \infty$.  

\subsection{Misspecification and the Appropriate Minimax Criterion}

As we will see, the expansion for $\Exp_{i}[T_{A}]$ remains valid even when $p_{i}=0$, as long as certain additional conditions are satisfied  (see (\ref{theo1})). That is, we allow the number of active components, $\tilde{K}= \# \{p_{i}: p_{i}\neq 0\}$, of an arbitrary mixture rule to be smaller than $K$. It is useful to incorporate this case in our analysis, since the ``true'' distribution may not be included in $\cP$. 
For example, in the  slippage problem, the actual number of out-of-control channels is typically not known in advance. Thus, 
the cardinality of $\cP$ is $K= \sum_{l=1}^{L} \binom{L}{l}=2^{K}-1$. However, if a designer  assumes that only one channel can be out-of-control, which is the hardest case to detect, the resulting mixture rule will assign a positive weight to only $L$ of the $K$ probability measures in $\cP$, so that 
$\tilde{K}=L<K$. Another case where such a misspecification arises naturally is when approximating a continuous alternative hypothesis with a discrete set of points. Then, it is useful to evaluate the performance of the discrete mixture rule also \textit{between} the points that were used for its design.

Finally, allowing some components of the mixing distribution to be 0 helps to explain why we 
chose to design a sequential test that attains asymptotically $\inf_{T \in \cca} \max_{i} \, (I_{i} \Exp_{i}[T])$  instead of $\inf_{T \in \cca} \max_{i} \, \Exp_{i}[T]$,  which would be the straightforward minimax criterion. Indeed, in Subsection~\ref{ss:inef} we will see that when the Kullback--Leibler numbers $\{I_{i}\}$ are not identical, the latter criterion cannot be attained asymptotically, not even up to a first order, by a mixture rule that gives positive weights to all of its components. Thus,  minimizing the maximal expected sample size is an inappropriate criterion, since it dictates the use of a sequential test, $T^{*}$, that will not even be uniformly first-order asymptotically optimal, i.e., the ratio $\Exp_{i}[T^{*}]$/ $\inf_{T \in \cca} \Exp_{i}[T]$  will not converge to 1 as $\alpha \rightarrow 0$ for every  $1 \leq i\leq K$.

On the other hand,  the criterion $\inf_{T \in \cca} \max_{i} \, (I_{i} \Exp_{i}[T])$ leads to a non-trivial mixture test with $p_{i}>0$ for every $1\leq i \leq K$,  which (just like \textit{any} other fully-supported mixture rule)  attains  $\inf_{T \in \cca} \Exp_{i}[T]$ as $\alpha \rightarrow 0$ up to 
a constant for every $1 \leq i  \leq K$. Moreover, it is a natural minimax criterion  since, as we already mentioned above, $\max_{i} (I_{i} \Exp_{i}[T])= \max_{i} \Exp_{i}[\log \Lambda_{T}^{i}]$ is the maximum Kullback--Leibler distance between $\cP$ and $\Pro_{0}$ based on the observations up to time $T$. Thus, this criterion  provides a natural and meaningful way to express the minimax property and select a particular mixture rule for our  problem.

\subsection{Anscombe's Condition and Nonlinear Renewal Theory}

We would like at this point to highlight the connection of our work with the celebrated paper of  \citet{Ans1952}, where he insightfully
introduced the notion of  uniform continuity in probability and showed that it constitutes a sufficient condition for preserving convergence in distribution when using random times. More specifically, Anscombe called a sequence $\{\xi_{n}\}$ \textit{uniformly continuous in probability} (u.c.i.p), if  for every $\varepsilon>0$ there exists a $\delta>0$ such that  
\begin{equation}\label{ucip}
\Pro\brc{\max_{0 \leq  k \leq n \delta }|\xi_{n+k}-\xi_{n}| \geq \varepsilon} < \varepsilon \quad \text{for every}~~n\in \mathbb{N}.
\end{equation}
Moreover, he  proved that if a \textit{u.c.i.p.} sequence $\{\xi_{n}\}$ converges in distribution to a random variable $\xi$ as $n \rightarrow \infty$
and $\{t_{c}\}$ is a family of positive integer-valued random variables such that $t_{c}/c$ converges  in probability as $c \rightarrow \infty$ to a finite limit, 
then  $\{\xi_{t_{c}}\}$ also converges to $\xi$ as $c \rightarrow \infty$. This theorem has had a profound impact on the field of Sequential Analysis, 
since it provided the basis for developing Central Limit Theorems (CLTs) for stopped random walks and families of stopping times. However, the notion of 
uniform continuity in probability plays an important role in a much wider range of sequential problems, including the one we consider in this paper. 
The reason is its deep connection with \textit{nonlinear renewal theory}, which is the main tool that we use in order to describe the asymptotic performance of mixture rules. The corresponding analysis for continuous mixture rules was done by \citet{PollakSiegmund-AS75}
who first used such ideas before a general theory was presented by  \citet{LaiSieg-77,LaiSieg-79}.

More specifically, assuming that $p_{i}>0$, we can decompose the logarithm of the mixture statistic (\ref{disc})  as 
$\log \Lambda_n=\log \Lambda_n^{i}+Y_n^{i}$, where $Y_n^{i}$ is defined in (\ref{Y}) below. The idea then is that the asymptotic distribution of the overshoot $\log (\Lambda_{T_{A}}/A)$ as $A \rightarrow \infty$ will be the same as if $Y_n^{i}$ was 0, as long as $Y_n^{i}$, $n=1,2,\dots$ are ``slowly changing'' compared to the $\Pro_{i}$-random walk $\{\log \Lambda_n^{i}\}$. This observation leads to an accurate  approximation for $\Pro_{0}(T_{A}<\infty)$, and it is also the basis for the high-order expansion of $\Exp_{i}[T_{A}]$ (for which additional integrability and convergence conditions on $Y_n^{i}$  are required).

Nonlinear renewal theory makes the above argument rigorous by formalizing the notion of a ``slowly changing'' sequence. Specifically, $\{\xi_{n}\}$ is  said to be \textit{slowly changing}, if it is uniformly continuous in probability and satisfies the probabilistic growth condition 
\begin{equation} \label{gro}
\max_{0 \leq  k \leq n } |\xi_{k}| =o_{p}(n) \quad \text{as}~~ n \to \infty,
\end{equation}
i.e., $n^{-1} \max_{0 \leq  k \leq n } |\xi_{k}| \to 0$ in probability. Therefore, uniform continuity in probability is at the core of nonlinear renewal theory being the key condition that allows us to understand the behavior of overshoots of perturbed random walks, and consequently, a variety of ``sequential objects'', such as the mixture-based sequential tests  that we consider in this paper.  

Finally, we should note that using Anscombe's theorem we can establish the asymptotic normality of the (standardized)  mixture stopping rules $\{T_{A}\}$ as  $A \rightarrow \infty$. Whereas we do not need this property for our purposes, it is useful since it justifies using the expectation of $T_{A}$ in order to quantify its performance. 

\subsection{Organization of the Paper}

The rest of the paper is organized as follows. In Section~\ref{s:DMR}, we focus on discrete mixture rules and study their asymptotic  performance and optimality properties. In Section~\ref{s:Expfamily}, we consider the  case of an exponential family with continuous parameter.  
Section~\ref{s:MC} illustrates our findings with simulation experiments in the normal case. 
In Section~\ref{s:Rami}, we discuss ramifications  of our work in testing of two hypotheses and in sequential change detection, and we conclude in Section \ref{s:Conclu}.

\section{DISCRETE MIXTURE RULES}\label{s:DMR}

In this section we assume that  $\cP=\{\Pro_{i}\}_{i=1,\ldots, K}$ and we let  $\{p_{i}\}$ be an arbitrary probability mass function, i.e.,
$p_{i} \geq 0$ for every $i=1,\ldots, K$ and $\sum_{i=1}^{K} p_{i}=1$.

\subsection{Notation and Assumptions}\label{ss:Notation}

Let $\Lambda_n$ be as defined in \eqref{disc} and let $Z_{n}=\log \Lambda_{n}$. Then the mixture rule  (\ref{rule})  calls for stopping and accepting the hypothesis $\Hyp: \Pro \in \cP$ (rejecting the null hypothesis $\Hyp_0: \Pro=\Pro_0$)  at  
\begin{equation} \label{rule2}
T_{A} = \inf \{n \geq 1: Z_{n} \geq \log A \},
\end{equation}
where $T_A=\infty$ if there is no such $n$. For every $i=1,\ldots,K$, we set
\begin{equation} \label{not1}
\Lambda_{n}^{i} = \prod_{m=1}^{n} \frac{dF_{i}(X_m)}{dF_{0}(X_m)} \quad  \text{and} \quad  Z_{n}^{i} =\log \Lambda_{n}^{i}= 
\sum_{m=1}^{n} \log \frac{dF_{i}(X_m)}{dF_{0}(X_m)}, \quad  n \in \mathbb{N},
\end{equation}
and we  define the one-sided SPRTs
\begin{equation} \label{sprt}
T^{i}_{A}= \inf \{n \geq 1: \Lambda_{n}^{i} \geq A \}=\inf \{n \geq 1: Z_{n}^{i} \geq \log A \},
\end{equation}
where $A>1$ is a fixed threshold.

For every $i,j=1, \ldots,K$, we assume that  $0< \Exp_{j}|Z_{1}^{i}| < \infty$, where $\Exp_{j}[\cdot]$ refers to expectation with respect to $\Pro_{j}$, and we set
\begin{equation} \label{KL}
I_{i} = \Exp_{i}[Z_{1}^{i}] \qquad \text{and} \qquad  I_{ji} = \Exp_{j}[Z_{1}^{j}-Z_{1}^{i}]= I_{j} - \Exp_{j}[Z_{1}^{i}],
\end{equation}
i.e., $I_{j}$  ($I_{ji}$) is the  Kullback--Leibler divergence of $F_{j}$ versus $F_{0}$ ($F_{i}$). Therefore,
$\{Z_{n}^{i}\}_{n \ge 1}$ is a  random walk under $\Pro_{j}$ whose increments have mean $\Exp_{j}[Z_{1}^{i}]= I_{j}- I_{ji}$.
If $\Exp_{j}[Z_{1}^{i}]>0$, or equivalently $I_{j}> I_{ji}$, then, by renewal theory,
the asymptotic distribution of the overshoot $\eta_{A}^{i}=Z^{i}_{T^{i}_{A}} -\log A$ under $\Pro_{j}$ is well-defined and we denote it as 
$$
\cH_{j|i}(x) = \lim_{A \rightarrow \infty} \Pro_{j}(\eta_{A}^{i} \leq x).
$$
More specifically, $\cH_{j|i}$  can be defined in terms of the ladder variables of the $\Pro_{j}$-random walk $\{Z_{n}^{i}\}$. For the sake of brevity, we write $\cH_{i}= \cH_{i|i}$ for the asymptotic distribution of $\eta_{1}^{i}$ under $\Pro_{i}$, which is always well-defined since $\Exp_{i}[Z_{1}^{i}]=I_{i}>0$.

With a change of measure $\Pro_0 \mapsto \Pro_i$  it can be easily shown that 
\begin{equation} \label{er}
A \, \Pro_{0}(T^{i}_{A} < \infty) = A \, \Exp_i \brcs{1/\Lambda^i_{T^{i}_{A}} \ind{T^{i}_{A} < \infty}} =  \Exp_i \brcs{\exp (-\eta_{A}^{i}) \ind{T^{i}_{A} < \infty}}  \rightarrow \delta_{i}  \quad \text{as}~ A \rightarrow \infty,
\end{equation}
where $\delta_{i}$ is the Laplace transform of $\cH_{i}$, i.e.,
\begin{equation} \label{delta}
\delta_{i}= \int_{0}^{\infty} e^{-x} \, \cH_{i}(dx)= \lim_{A \rightarrow \infty} \Exp_{i}[ e^{- \eta_{A}^{i}}].
\end{equation}
Note that the quantity $\delta_{i}$ is also very important when designing the one-sided test  $T_{A}^{i}$. More specifically, \citet{Lorden-AS77} showed that if $c$ is the cost of every observation, then the one-sided SPRT $T_{A}^{i}$ with $A=\delta_{i} I_i/c$  attains $\inf_{T} [\Pro_{0}(T<\infty)+c\Exp_{i}[T]]$, where the infimum is taken over all stopping times. 

If $\Exp_{j}[\max\{0,Z_{1}^{i}\}^{2}]<\infty$, then from Wald's identity, (\ref{KL}) and renewal theory (\citet[Corollary 2.2]{Woodroofe-book82}), 
we have
\begin{equation} \label{renew}
[I_{j}-I_{ji}] \; \Exp_{j}[T_{A}^{i}] = \log A + \varkappa_{j|i} + o(1) \quad \text{as}~~ A \to \infty ,
\end{equation}
where $\varkappa_{j|i}$ is the average of $\cH_{j|i}$, i.e.,
\begin{equation} \label{kappa}
\varkappa_{j|i}= \int_{0}^{\infty} x \,  \cH_{j|i}(dx)= \lim_{A \rightarrow \infty}  \Exp_{j}[ \eta_{A}^{i}].
\end{equation}
It is a direct consequence of (\ref{renew}) that
\begin{equation} \label{renew2}
I_{i} \; \Exp_{i}[T_{A}^{i}] = \log A + \varkappa_{i} + o(1)  \quad \text{as}~~ A \to \infty,
\end{equation}
where $\varkappa_{i}= \varkappa_{i|i}$. In the next section, we show that the limiting average overshoots $\varkappa_{1}, \dots,\varkappa_K$ completely determine the  (optimal) mixing distribution of the nearly minimax mixture rule.

If $\Pro_{0}(T^{i}_{A}<\infty)=\alpha$, where $\alpha$ is a predefined number ($0<\alpha<1$), then (\ref{er}) and (\ref{renew2}) imply that
\begin{equation} \label{per1}
I_{i} \,  \Exp_{i}[T_{A}^{i}] = |\log \alpha| +  \log (\delta_{i} \, e^{\varkappa_{i}}) +o(1) \quad \text{as}~~ \alpha \to 0.
\end{equation}
Due to (\ref{optimum}), this is the  optimal asymptotic performance under $\Pro_{i}$ up  to an $o(1)$ term. Therefore, asymptotic approximation \eqref{per1} provides a benchmark for the performance of any stopping time under $\Pro_{i}$.

In order to study the performance of $T_{A}$ under $\Pro_{i}$ even if $p_{i}=0$,  for every $i=1,\ldots,K$ we define the index
\begin{equation} \label{star}
i^{*}= \arg \max_{j: p_{j}>0} \Exp_{i}[Z_{1}^{j}]= \arg \min_{j:p_{j}>0} I_{ij}
\end{equation}
and we assume that it is unique. When $p_{i}>0$, this is obviously the case since $i^{*}=i$. On the other hand, when $p_{i}=0$, $i^{*}$ represents the ``active'' index that is closest to $i$, in the sense of the Kullback--Leibler distance for the corresponding distributions. 
Thus, assuming that $i^{*}$ is unique, we exclude the case that there are two or more active indexes that are ``equidistant'' from $i$  when $p_{i}=0$. Then, for every $i=1,\ldots, K$, we have the decomposition $Z_{n}=Z_{n}^{i^{*}}+ Y_{n}^{i^{*}}$, where
\begin{equation}  \label{Y}
Y_{n}^{i^{*}}= \log p_{i^{*}} + \log \left( 1+ \sum_{j \neq i^*}  \frac{p_j}{p_{i^{*}}} \,  \frac{\Lambda_{n}^{j}}{\Lambda_{n}^{i^{*}}} \right), \quad n \in \mathbb{N}.
\end{equation}
Based on this decomposition and the fact that when $i^*$ is unique the sequence $\{Y_{n}^{i^{*}}\}$ is slowly changing, we are able to use nonlinear renewal theory and understand the  asymptotic behavior of the mixture rule $T_{A}$. When $p_{i}=0$ and $i^{*}$ is not unique, this decomposition is not valid and this case has to be considered separately.  We do not consider this case here, since this would break the flow of the presentation without adding any insight to our main points. Methods similar to those developed in 
\citet{dragalin-it00} and \cite{Tartakovskyetal-IEEEIT03} can be used for this purpose.

Finally, in the case $p_{i}=0$, we will also need the following Cram\'er-type condition:
\begin{condition} \label{cond}
For every $j \neq i^{*}$ with $p_{j}>0$ there exists $\gamma_{j}>0$ such that $g_{j}(\gamma_{j})=1$ and $g_{j}'(\gamma_{j})<\infty$,
where $g_{j}(t)= \Exp_{j}[e^{t(Z_{1}^{j}-Z_{1}^{i^{*}})}].$
\end{condition}

\subsection{Modes of Asymptotic Optimality}\label{ss:Modes of asympt optim}
Ideally,  we would like to find an optimal test $T_{{\rm opt}} \in \cca$ that minimizes the expected sample size $\inf_{T\in \cca} \Exp_{i}[T]$ for all $i=1,\dots,K$,  where $\cca=\{T:\Pro_{0}(T<\infty)\leq \alpha\}$. Since this is an extremely difficult task (if at all possible), we would like to find a test $T_{\rm o} \in \cca$ that  attains  $\inf_{T\in \cca} \Exp_{i}[T]$ at least \textit{asymptotically} for all  $i=1,\ldots, K$. We distinguish between the following three notions of asymptotic optimality. We  say that $T_{\rm o}$  minimizes $\inf_{T\in \cca} \Exp_{i}[T]$ to \textit{first}-order if $\Exp_{i}[T_{\rm o}]= \inf_{T\in \cca} \Exp_{i}[T] \, (1+o(1))$;  to \textit{second}-order  if $\Exp_{i}[T_{\rm o}]= \inf_{T\in \cca} \Exp_{i}[T] + O(1)$; and to \textit{third}-order, if $\Exp_{i}[T_{\rm o}]= \inf_{T\in \cca} \Exp_{i}[T] +o(1)$, where $O(1)$ is asymptotically bounded and $o(1)$ an asymptotically vanishing term as $\alpha \rightarrow 0$. 

Since the one-sided SPRT $T_A^i$ is exactly optimal under $\Pro_{i}$, it follows from (\ref{per1}) that
\[
\inf_{T\in \cca} \Exp_{i}[T] = \frac{1}{I_i} \brcs{|\log \alpha|  +\log (\delta_{i} \, e^{\varkappa_{i}})} +o(1) \quad \text{as}~~ \alpha \to 0.
\]
Using this fact along with Theorem~\ref{theo1},
we will see that a mixture rule is second-order asymptotically optimal under every $\Pro_{i} \in \cP$ if and only if
it assigns positive weights to all  probability measures in the alternative hypothesis, that is $p_{i}>0$ for every $i=1,\ldots,K$. 
In other words, for every fully-supported mixture test $T_{A}$ with $\Pro_{0}(T_{A}<\infty)=\alpha$,
the expectation $\Exp_{i}[T_{A}]$  has a bounded distance from $\inf_{T\in \cca} \Exp_{i}[T]$ as $A \rightarrow \infty$  for every $i=1,\ldots, K$.

\subsection{Asymptotic Performance}\label{ss:Asympt perf}

The main result of this subsection is Theorem~\ref{theo1}, which provides a high-order asymptotic approximation for $\Exp_{i}[T_{A}]$ as $A \rightarrow \infty$. Its proof is based on Lemmas~\ref{leo1}--\ref{leo4}. In Lemma \ref{leo1} we present the main properties of  the sequence $\{Y_{n}^{i^{*}}\},$ in Lemma \ref{leo2} we obtain sufficient conditions for $T_{A}$ to have power 1 under $\Pro_{i}$, and in Lemmas  \ref{leo3} and \ref{leo4} we obtain asymptotic approximations for  $\log \Pro_{0}(T_{A}<\infty)$ and  $\Exp_{i}[T_{A}]$ in terms of the threshold $A$.

\begin{lemma} \label{leo1}
For every $i$, $\Pro_{i}(Y^{i^{*}}_{n} \downarrow \log p_{i^{*}})=1$, and hence the sequence $\{Y^{i^{*}}_n\}$ is slowly changing under $\Pro_{i}$. Moreover, 
if either $p_{i}>0$ or if $p_{i}=0$ and Condition \ref{cond} is satisfied, then there exists $\gamma_{i^{*}} >0$  such that the following asymptotic equality holds
\begin{equation} \label{asy}
\Pro_{i} \Bigl( \max_{0 \leq k \leq n}  |Y_{k}^{i^{*}} -\log p_{i^{*}}| > x \Bigr) =O\left(e^{-\gamma_{i^{*}} x}\right) \quad \text{as}~~ x \rightarrow \infty.
\end{equation}
\end{lemma}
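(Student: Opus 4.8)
The plan is to treat the three assertions of the lemma in order, working throughout with the decomposition in \eqref{Y}. Write $Y_n^{i^*}-\log p_{i^*}=\log(1+W_n)$, where $W_n=\sum_{j\neq i^*,\,p_j>0}(p_j/p_{i^*})\,e^{S_n^{j}}$ and $S_n^{j}=Z_n^{j}-Z_n^{i^*}$. For the first assertion I would note that, under $\Pro_i$, each $\{S_n^{j}\}$ is a random walk whose increment $Z_1^{j}-Z_1^{i^*}$ has mean $\Exp_i[Z_1^{j}]-\Exp_i[Z_1^{i^*}]$, which is strictly negative for every active $j\neq i^*$ precisely because $i^*$ is the \emph{unique} maximizer in \eqref{star}. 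By the strong law of large numbers $S_n^{j}/n\to \Exp_i[Z_1^{j}-Z_1^{i^*}]<0$ a.s., so $S_n^{j}\to-\infty$ and $e^{S_n^{j}}\to 0$ a.s.; summing the finitely many terms gives $W_n\to 0$ a.s. Since $W_n\geq 0$, we get $Y_n^{i^*}\geq \log p_{i^*}$ for all $n$ together with $Y_n^{i^*}\to\log p_{i^*}$ a.s., i.e.\ the convergence is from above, which is the claim $\Pro_i(Y_n^{i^*}\downarrow\log p_{i^*})=1$.

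The second assertion is the general fact that a sequence converging a.s.\ to a constant is slowly changing. The growth condition \eqref{gro} is immediate: an a.s.\ convergent sequence is a.s.\ bounded, so $\max_{0\leq k\leq n}|Y_k^{i^*}|$ converges a.s.\ to a finite limit and is therefore $o_p(n)$. For uniform continuity in probability \eqref{ucip}, since every index $n+k\geq n$, I would bound $|Y_{n+k}^{i^*}-Y_n^{i^*}|\leq 2R_n$ with $R_n:=\sup_{m\geq n}|Y_m^{i^*}-\log p_{i^*}|$; as $R_n\downarrow 0$ a.s.\ we have $\Pro_i(2R_n\geq\varepsilon)\to 0$, so the u.c.i.p.\ inequality holds for all $n\geq N_0$ for some $N_0$, while for the finitely many $n<N_0$ one chooses $\delta<1/N_0$, forcing $n\delta<1$ and collapsing the maximum to the single term $k=0$. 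This produces one $\delta$ valid for every $n$.

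For the exponential bound I would first reduce to all-time suprema of the walks $S_n^{j}$. Because $\{|Y_k^{i^*}-\log p_{i^*}|>x\}=\{W_k>e^x-1\}$ and $W_k\leq\sum_{j}(p_j/p_{i^*})e^{M^{j}}$ with $M^{j}:=\sup_{k\geq 0}S_k^{j}$ (the bound being uniform in $n$), a union bound over the at most $\tilde{K}-1$ active terms gives
\[
\Pro_i\Bigl(\max_{0\leq k\leq n}|Y_k^{i^*}-\log p_{i^*}|>x\Bigr)\leq\sum_{j\neq i^*,\,p_j>0}\Pro_i\Bigl(M^{j}>\log\tfrac{(e^x-1)\,p_{i^*}}{(\tilde{K}-1)\,p_j}\Bigr).
\]
It therefore suffices to establish a geometric tail bound $\Pro_i(M^{j}>y)\leq e^{-\gamma_j y}$ for each active $j\neq i^*$ and then set $\gamma_{i^*}=\min_j\gamma_j$, since each summand becomes $O\bigl((e^x-1)^{-\gamma_j}\bigr)=O(e^{-\gamma_j x})$.

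The geometric tail bound is the Lundberg inequality for a negative-drift random walk, and pinning down the adjustment coefficient is where the case split—and the main difficulty—lies. The exponent $\gamma_j>0$ is the positive root of $\Exp_i[e^{\gamma(Z_1^{j}-Z_1^{i^*})}]=1$: once it exists, $\{e^{\gamma_j S_n^{j}}\}$ is a nonnegative $\Pro_i$-martingale of mean $1$, and the optional-stopping/Fatou argument at $\tau_y=\inf\{k:S_k^{j}>y\}$ yields $\Pro_i(M^{j}>y)=\Pro_i(\tau_y<\infty)\leq e^{-\gamma_j y}$. When $p_i>0$ we have $i^*=i$ and the root is explicit, $\Exp_i[e^{Z_1^{j}-Z_1^{i}}]=\int(dF_j/dF_i)\,dF_i=\int dF_j=1$, so $\gamma_j=1$ works with no extra hypothesis (and its derivative $\Exp_j[Z_1^{j}-Z_1^{i}]=I_{ji}$ is finite by the standing moment assumption). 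When $p_i=0$ the increment still has negative drift under $\Pro_i$ by the maximality in \eqref{star}, but the existence of a strictly positive root with a finite derivative is no longer automatic; this is supplied precisely by the Cram\'er-type Condition~\ref{cond}. I expect this last point to be the principal obstacle: ensuring that the relevant moment generating function of $Z_1^{j}-Z_1^{i^*}$ actually crosses $1$ at some $\gamma_j>0$ under the operating measure, which is exactly what Condition~\ref{cond} guarantees and what turns the union bound above into the stated $O(e^{-\gamma_{i^*}x})$ estimate.
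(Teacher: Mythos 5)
Your proof is correct and follows essentially the same route as the paper's: the decomposition \eqref{Y} plus the strong law of large numbers for the convergence-from-above and slowly-changing claims, and then a union bound over the active $j \neq i^{*}$ combined with mean-one-martingale (Doob/Lundberg) tail bounds for the negative-drift walks $\{Z_n^j - Z_n^{i^*}\}$, with $\gamma = 1$ coming for free when $p_i > 0$ and Condition \ref{cond} supplying the positive root when $p_i = 0$, finishing with $\gamma_{i^*} = \min_j \gamma_j$ exactly as the paper does. Your reading of Condition \ref{cond} as a Cram\'er condition under the operating measure $\Pro_i$ is the operationally correct one: under $\Pro_j$ the increment $Z_1^j - Z_1^{i^*}$ has positive mean $I_{ji^*}$, so its moment generating function exceeds $1$ for all $t>0$ and the condition as literally printed (with $\Exp_j$) would be vacuous --- the $\Exp_j$ there is evidently a typo for $\Exp_i$, and your proof uses what the paper's own argument actually requires.
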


\begin{proof}
From (\ref{Y}) it follows directly that $Y_{n}^{i^{*}} \geq \log p_{i^{*}}$. Moreover, by the strong law of large numbers, 
$$ 
\frac{1}{n} \log \, \frac{\Lambda_{n}^{j}}{\Lambda_{n}^{i^{*}}} = \frac{ Z_{n}^{j}-Z_{n}^{i^{*}}}{n} \xrightarrow[n\to\infty]{\Pro_{i}-\text{a.s.}}
\Exp_{i}[Z_{1}^{j}-Z_{1}^{i^{*}}]= I_{i i^{*}} - I_{ij} \quad \text{for every}~ j\neq i^{*}.
$$
Since $ I_{i i^{*}} <  I_{i j}$ (by the definition of $i^{*}$), it follows that $\Pro_{i}( \Lambda_{n}^{j} / \Lambda_{n}^{i^{*}}  \rightarrow 0)=1$ for every $j \neq  i^{*}$ with $p_{j}>0$, and consequently, $\Pro_{i}( Y^{i^{*}}_{n} \rightarrow \log p_{i^{*}})=1$. As a result, $\{Y^{i^{*}}_n\}$ satisfies  \eqref{ucip} and (\ref{gro}). Thus, it is a slowly changing sequence under $\Pro_{i}$.

To prove (\ref{asy}), suppose first that $p_{i}>0$. Then, $i^{*}=i$ and $\sum_{j \neq i} \Lambda^{j}_n/\Lambda^{i}_n$ is a $\Pro_{i}$-martingale with mean $K-1$. Thus, from (\ref{Y}) and  Doob's submartingale inequality we obtain
\begin{equation} \label{probformaxY}
\begin{aligned}
\Pro_{i}\brc{\max_{0 \leq k \leq n} |Y_{k}^{i} -\log p_{i}|> x} &= 
\Pro_{i} \brc{1+ \max_{0 \leq k \leq n}  \sum_{j \neq i} \frac{p_{j}}{p_{i}} \, \frac{\Lambda_{k}^{j}}{\Lambda_{k}^{i}} > e^{x} } \nonumber \\
 &\leq   \Pro_{i} \brc{\max_{0 \leq k \leq n}  \sum_{j \neq i}  \frac{\Lambda_{k}^{j}}{\Lambda_{k}^{i}} >  p_{i} (e^{x} -1)}  \leq  \frac{(K-1)}{p_{i} (e^{x} - 1)},
 \end{aligned}
\end{equation}
which implies that (\ref{asy}) holds with $\gamma_{i}=1$.

Suppose now that $p_{i}=0$, in which case $i^{*} \neq i$.  Then, working as in (\ref{probformaxY}) and using the following inclusion
$$ 
\Bigl\{ \max_{0 \leq k \leq n}   \sum_{j \neq i^{*}: p_{j}>0}  \, \Lambda_{k}^{j} / \Lambda_{k}^{i^{*}}  >  y \Bigr\}
 \subset \bigcup_{j \neq i^{*}: p_{j}>0} \Bigl\{ \max_{0 \leq k \leq n}  \Lambda_{k}^{j} / \Lambda_{k}^{i^{*}}  >  \frac{y}{K-1} \Bigr\},
 $$
which holds for every  positive constant $y$, we obtain
\begin{equation*}
\begin{aligned}
\Pro_{i}\brc{ \max_{0 \leq k \leq n} |Y_{k}^{i^{*}} -\log p_{i^{*}}| > x} &\leq
\Pro_{i} \brc{ \max_{0 \leq k \leq n}   \sum_{j \neq i^{*}: p_{j}>0}  \, \frac{\Lambda_{k}^{j}}{\Lambda_{k}^{i^{*}}}  >  p_{i^{*}} (e^{x}-1)}
\\
&\leq  \sum_{j \neq  i^{*}} \, \Pro_{i} \brc{ \max_{0 \leq k \leq n} \frac{\Lambda_{k}^{j}}{ \Lambda_{k}^{i^{*}}}  >  \frac{p_{i^{*}} (e^{x}-1)}{K-1}}
  \\
  &=  \sum_{j \neq i^{*}} \Pro_{i} \brc{\max_{0 \leq k \leq n} [Z_{k}^{j}- Z_{k}^{i^{*}}]  >  x+ \Theta(1)},
  \end{aligned}
 \end{equation*}
where $\Theta(1)$ is a term that is asymptotically bounded from above and from below as $x  \rightarrow \infty$.
For every $j \neq i^{*}$, the process $\{Z_{n}^{j}-Z_{n}^{i^{*}}\}_{n\ge 1}$ is a $\Pro_{i}$-random walk whose increments have mean $\Exp_{i}[Z_{1}^{j}-Z_{1}^{i^{*}}]<0,$
which is negative due to the definition of $i^{*}$. Thus, by  Condition \ref{cond},  for every $j \neq i^{*}$ with $p_{j}>0$ there exists a positive constant $\gamma_{j}>0$ such that  
$$
\Pro_{i}\brc{\max_{0 \leq k \leq n} [Z_{k}^{j}- Z_{k}^{i^{*}}]  >  x+ \Theta(1)}=O\brc{e^{-\gamma_{j} x}},
$$
which implies that  (\ref{asy})  is satisfied with $\gamma_{i^{*}}=\min\{\gamma_{j}: j \neq i^{*}, p_{j}>0\}$.
\end{proof}

\begin{lemma} \label{leo2}
If either $p_{i}>0$ or $p_{i}=0$ but $I_{i} > I_{ii^*}$, then $\Pro_{i}(T_{A} < \infty)=1 \;$ $\forall \; A>1$  and  $\Pro_{i}(T_{A} \rightarrow \infty)=1$ as $A \rightarrow \infty$.
\end{lemma}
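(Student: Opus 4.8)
The plan is to reduce both hypotheses to the single statement that the $\Pro_i$-random walk $\{Z_n^{i^*}\}$ has strictly positive drift, and then to leverage the almost-sure convergence of $\{Y_n^{i^*}\}$ furnished by Lemma~\ref{leo1} to deduce that $Z_n \to +\infty$ $\Pro_i$-almost surely; both assertions are elementary consequences of this divergence. First I would verify the drift sign. Recall from Subsection~\ref{ss:Notation} that the increments of $\{Z_n^{i^*}\}$ have $\Pro_i$-mean $\Exp_i[Z_1^{i^*}] = I_i - I_{i i^*}$. If $p_i > 0$, then $i^* = i$ and this mean equals $I_i > 0$; if $p_i = 0$ but $I_i > I_{i i^*}$, then it equals $I_i - I_{i i^*} > 0$ by hypothesis. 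Thus in either case $\Exp_i[Z_1^{i^*}] > 0$.

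Next, by the strong law of large numbers $Z_n^{i^*}/n \to \Exp_i[Z_1^{i^*}] > 0$ $\Pro_i$-a.s., so $Z_n^{i^*} \to +\infty$ $\Pro_i$-a.s. By Lemma~\ref{leo1}, $Y_n^{i^*} \downarrow \log p_{i^*}$ $\Pro_i$-a.s., and $\log p_{i^*}$ is finite because $p_{i^*} > 0$ by the definition of $i^*$. Using the decomposition $Z_n = Z_n^{i^*} + Y_n^{i^*}$, it follows that $Z_n \to +\infty$ $\Pro_i$-a.s. For the first assertion, fix $A > 1$: on the almost-sure event where $Z_n \to +\infty$ there exists $n$ with $Z_n \geq \log A$, so $T_A = \inf\{n: Z_n \geq \log A\} < \infty$; hence $\Pro_i(T_A < \infty) = 1$.

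For the second assertion I would argue pathwise on the same almost-sure event. Since each $Z_n$ is finite, for every fixed $N$ the running maximum $M_N = \max_{1 \leq k \leq N} Z_k$ is finite; whenever $\log A > M_N$ no crossing can occur within the first $N$ steps, so $T_A > N$. As $A \to \infty$ we have $\log A \to \infty$, so for each $N$ eventually $T_A > N$, which means $T_A \to \infty$; thus $\Pro_i(T_A \to \infty \text{ as } A \to \infty) = 1$. The only points requiring genuine care are the identification of the positive-drift condition in the case $p_i = 0$ (which is precisely what the assumption $I_i > I_{i i^*}$ encodes via the definition of $i^*$) and, for the second assertion, the observation that mere finiteness of $T_A$ for each $A$ does not suffice---one must invoke the finiteness of the running maximum to force $T_A$ to grow without bound. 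Neither is a substantial obstacle, so the lemma is essentially a corollary of Lemma~\ref{leo1}.
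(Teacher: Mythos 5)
Your proof is correct and takes essentially the same route as the paper's: both identify the positive drift $\Exp_i[Z_1^{i^*}]=I_i-I_{ii^*}>0$ of the $\Pro_i$-random walk $\{Z_n^{i^*}\}$, invoke the almost-sure convergence $Y_n^{i^*}\downarrow\log p_{i^*}$ from Lemma~\ref{leo1}, and conclude via the decomposition $Z_n=Z_n^{i^*}+Y_n^{i^*}$. Your pathwise running-maximum argument for $\Pro_i(T_A\to\infty)=1$ simply spells out the step the paper compresses into its final sentence, and is a correct elaboration.
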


\begin{proof}
First of all, we observe that $\{Z_{n}^{i^{*}}\}$ is a $\Pro_{i}$-random walk whose increments have mean $\Exp_{i}[Z_{1}^{i^{*}}]= I_{i}- I_{ii^*}$.
Due to the assumption of the lemma, the latter is positive, and therefore, $\Pro_{i}(Z^{i^{*}}_{n} \rightarrow \infty)=1$ as $n \to \infty$. Since 
\begin{equation} \label{deco}
T_{A} = \inf \{n \geq 1:  Z_{n}^{i^{*}}  + Y_{n}^{i^{*}} \geq \log A \},
\end{equation}
and, by Lemma~\ref{leo1},  $\Pro_{i}(Y^{i^{*}}_{n} \downarrow \log p_{i^{*}})=1$  we conclude that $T_{A}$ terminates $\Pro_{i}$-a.s.\ and that  $\Pro_{i}(T_{A} \rightarrow \infty)=1$ as $A \rightarrow \infty$.
\end{proof}

\begin{lemma} \label{leo3}
For every $A>1$, $T_{A}$ is a test of level $1/A$, i.e., $\Pro_{0}(T_{A}< \infty) \leq 1/A$. Moreover, if for every $i$ such that $p_{i}>0$
the distribution of  $Z_{1}^{i}$ is non-arithmetic, then
\begin{equation} \label{aaaa}
A \,  \Pro_{0}(T_{A}< \infty)  \rightarrow \sum_{i: p_{i}>0}    p_{i} \, \delta_{i} \quad \text{as}~~ A\to\infty.
\end{equation}
\end{lemma}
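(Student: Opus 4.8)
The plan is to mirror the change-of-measure computation already carried out for the single one-sided SPRT in (\ref{er}), but now with the mixture measure playing the role of the alternative. Introduce the mixture probability measure $\bar\Pro = \sum_{i: p_i>0} p_i \, \Pro_i$ and observe that, on $\Fc_n$, its likelihood ratio against $\Pro_0$ is exactly the mixture statistic, $d\bar\Pro/d\Pro_0|_{\Fc_n} = \sum_{i:p_i>0} p_i \Lambda_n^i = \Lambda_n$, since each $\Lambda_n^i$ has unit $\Pro_0$-mean. By Lemma~\ref{leo2}, $\Pro_i(T_A<\infty)=1$ for every $i$ with $p_i>0$ (there $i^{*}=i$ and $I_{ii^{*}}=0<I_i$), hence $\bar\Pro(T_A<\infty)=1$. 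Writing $\chi_A = Z_{T_A}-\log A \ge 0$ for the overshoot of the mixture statistic, the Wald likelihood-ratio identity applied $\Fc_n$-by-$\Fc_n$ on the event $\{T_A<\infty\}$ (on which $\Lambda_{T_A}\ge A>0$) gives
\begin{equation*}
A\,\Pro_0(T_A<\infty) = A\,\bar\Exp\!\left[\frac{1}{\Lambda_{T_A}}\,\ind{T_A<\infty}\right] = \bar\Exp\!\left[e^{-\chi_A}\right].
\end{equation*}

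The level bound is then immediate: since $\chi_A \ge 0$ we have $e^{-\chi_A}\le 1$, so the displayed identity yields $A\,\Pro_0(T_A<\infty)\le 1$, i.e.\ $\Pro_0(T_A<\infty)\le 1/A$ for every $A>1$. (Equivalently, one may note that $\{\Lambda_n\}$ is a nonnegative $\Pro_0$-martingale with unit mean and invoke Ville's maximal inequality directly.)

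For the precise asymptotic I would decompose $\bar\Exp[e^{-\chi_A}] = \sum_{i:p_i>0} p_i\,\Exp_i[e^{-\chi_A}]$ and show that $\Exp_i[e^{-\chi_A}]\to \delta_i$ for each active $i$. Fix such an $i$, so that $i^{*}=i$ and $Z_n = Z_n^i + Y_n^i$ with $\{Y_n^i\}$ slowly changing under $\Pro_i$ by Lemma~\ref{leo1}. By nonlinear renewal theory (Lai--Siegmund, Woodroofe), the overshoot $\chi_A$ of the perturbed random walk $\{Z_n^i+Y_n^i\}$ over $\log A$ has, as $A\to\infty$, the same limiting distribution as the overshoot $\eta_A^i$ of the pure $\Pro_i$-random walk $\{Z_n^i\}$, namely $\cH_i$; the assumed non-arithmeticity of $Z_1^i$ is exactly what guarantees this nondegenerate renewal limit. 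Since $x\mapsto e^{-x}$ is bounded and continuous on $[0,\infty)$, the weak convergence $\chi_A\Rightarrow \cH_i$ forces $\Exp_i[e^{-\chi_A}]\to \int_0^\infty e^{-x}\,\cH_i(dx)=\delta_i$ by (\ref{delta}). Summing over the active indices delivers (\ref{aaaa}).

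The main obstacle is the rigorous application of nonlinear renewal theory to identify the limiting overshoot law. The delicate point is that, after the decomposition, the pure random walk $\{Z_n^i\}$ is effectively stopped at a boundary $\log A - Y_{T_A}^i$ that is itself random and slowly varying, and one must argue that this perturbation does not alter the limiting overshoot distribution. The strong a.s.\ monotone convergence $Y_n^i\downarrow \log p_i$ from Lemma~\ref{leo1}, together with the slowly-changing property, is precisely what lets us absorb the shift and appeal to the renewal limit of $\{Z_n^i\}$; verifying the hypotheses of the nonlinear renewal theorem (and checking that no mass escapes, which is automatic here since $e^{-\chi_A}$ is bounded, so only weak convergence is needed) is the crux of the argument. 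A secondary, routine point is justifying the Wald identity for the possibly-infinite $T_A$, which is handled by summing over $\{T_A=n\}$ and using $\bar\Pro(T_A<\infty)=1$.
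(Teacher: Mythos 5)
Your proof is correct and follows essentially the same route as the paper: the same change of measure to the mixture $\Pro = \sum_{i:p_i>0} p_i\,\Pro_i$ (whose restriction to $\Fc_n$ has likelihood ratio $\Lambda_n$), the same Wald-type identity $A\,\Pro_0(T_A<\infty)=\Exp\bigl[e^{-(Z_{T_A}-\log A)}\bigr]\le 1$ for the level bound, and the same decomposition over active indices combined with the nonlinear renewal theorem (Woodroofe, Theorem 4.1) applied to $Z_n=Z_n^i+Y_n^i$ with $\{Y_n^i\}$ slowly changing by Lemma~\ref{leo1}, so that $Z_{T_A}-\log A\Rightarrow\cH_i$ under $\Pro_i$ and boundedness of $e^{-x}$ yields \eqref{aaaa}. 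Your closing observation that only weak convergence is needed because $e^{-x}$ is bounded and continuous is exactly the paper's bounded-convergence step.
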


\begin{proof}

Define the probability measure $\Pro= \sum_{i: p_{i}>0} p_i \, \Pro_{i}$. If $p_{i}>0$,  then by Lemma \ref{leo2},
$\Pro_{i}(T_{A}< \infty)=1$, and therefore,  $\Pro(T_{A}< \infty)=1$. Moreover,
\begin{equation}  \label{proba}
\frac{d \Pro}{d\Pro_{0}} \Big|_{\Fc_{n}} =\Lambda_{n} = \sum_{i=1}^{K}  p_i \Lambda_{n}^{i}.
\end{equation}
Therefore, if $\Exp[\cdot]$ denotes expectation with respect to $\Pro$, change of  measure $\Pro_0 \mapsto \Pro$ yields
\begin{equation} \label{como}
 A \, \Pro_{0}(T_{A}< \infty) = A \, \Exp[e^{-Z_{T_{A}}}] =   \Exp\brcs{e^{-(Z_{T_{A}}- \log A)}} \leq 1,
\end{equation}
which proves the first assertion. Furthermore, from (\ref{como}) and the definition of $\Pro$ we have
\begin{equation} \label{com2}
 A \, \Pro_{0}(T_{A}< \infty) =\sum_{i: p_{i}>0}  p_i  \, \Exp_{i}\brcs{e^{-(Z_{T_{A}}- \log A)}}.
\end{equation}
If $p_{i}>0$, then $i^{*}=i$  and we have  the decomposition $Z_{n}= Z_{n}^{i}+Y_{n}^{i}$, where $\{Z_{n}^{i}\}$ is a $\Pro_{i}$-random walk with positive mean $I_{i}$  and  $\{Y_{n}^{i}\}$ is a slowly changing sequence under $\Pro_{i}$. Therefore,
if also the distribution of $Z_{1}^{i}$ is non-arithmetic, then $Z_{T_{A}}- \log A$ converges weakly as $A \rightarrow \infty$ to $\cH_{i}(\cdot)$ under $\Pro_{i}$ (see \citet[Theorem 4.1]{Woodroofe-book82}). Thus, recalling the definition of $\delta_{i}$ in (\ref{delta}) and applying the Bounded Convergence Theorem, from (\ref{com2}) we obtain (\ref{aaaa}). This completes the proof.

\end{proof}

\begin{lemma} \label{leo4}
Suppose that $Z_{1}^{i^{*}}$ has a non-arithmetic distribution with a finite second moment under $\Pro_{i}$. If either $p_{i}>0$ or $I_{i} > I_{ii^*}$ and Condition \ref{cond} holds, then 
\begin{equation} \label{perf10}
(I_{i}- I_{ii^*}) \; \Exp_{i}[T_{A}] = \log A + \varkappa_{i|i^{*}} - \log p_{i^{*}} + o(1) \quad \text{as}~~ A \to \infty.
\end{equation}
\end{lemma}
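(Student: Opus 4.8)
The plan is to exploit the decomposition $Z_{n}=Z_{n}^{i^{*}}+Y_{n}^{i^{*}}$ from \eqref{Y}. Under the hypotheses of the lemma and by Lemma~\ref{leo1}, $\{Z_{n}^{i^{*}}\}$ is a $\Pro_{i}$-random walk with positive mean $\mu:=I_{i}-I_{ii^{*}}>0$, finite second moment, and non-arithmetic increments, while $\{Y_{n}^{i^{*}}\}$ is slowly changing. I would derive \eqref{perf10} from Wald's identity applied to $\{Z_{n}^{i^{*}}\}$, thereby reducing the problem to identifying the limiting behaviour of two quantities: the overshoot of the perturbed statistic and the perturbation evaluated at $T_{A}$.

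First I would check that $\Exp_{i}[T_{A}]<\infty$, so that Wald's identity applies. Since $Y_{n}^{i^{*}}\geq \log p_{i^{*}}$ by \eqref{Y}, we have $T_{A}\leq \inf\{n\geq 1: Z_{n}^{i^{*}}\geq \log A-\log p_{i^{*}}\}$, and this last stopping time has finite $\Pro_{i}$-expectation because $\{Z_{n}^{i^{*}}\}$ has positive drift. Wald's identity then gives $\mu\,\Exp_{i}[T_{A}]=\Exp_{i}[Z^{i^{*}}_{T_{A}}]$. Writing $\chi_{A}:=Z_{T_{A}}-\log A\geq 0$ for the overshoot of the mixture statistic and using $Z^{i^{*}}_{T_{A}}=Z_{T_{A}}-Y^{i^{*}}_{T_{A}}=\log A+\chi_{A}-Y^{i^{*}}_{T_{A}}$, I obtain
\[
\mu\,\Exp_{i}[T_{A}]=\log A+\Exp_{i}[\chi_{A}]-\Exp_{i}[Y^{i^{*}}_{T_{A}}],
\]
so it remains to compute the limits of the two expectations on the right.

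For the perturbation term, Lemma~\ref{leo2} gives $T_{A}\to\infty$ $\Pro_{i}$-a.s.\ as $A\to\infty$, and Lemma~\ref{leo1} gives $Y^{i^{*}}_{n}\to\log p_{i^{*}}$ $\Pro_{i}$-a.s.\ with $Y^{i^{*}}_{n}\geq\log p_{i^{*}}$; hence $Y^{i^{*}}_{T_{A}}\to\log p_{i^{*}}$ $\Pro_{i}$-a.s. Because the bound \eqref{asy} holds uniformly in $n$, letting $n\to\infty$ shows that $M:=\sup_{k\geq 0}(Y^{i^{*}}_{k}-\log p_{i^{*}})$ has an exponentially decaying tail and therefore $\Exp_{i}[M]<\infty$; as $0\leq Y^{i^{*}}_{T_{A}}-\log p_{i^{*}}\leq M$ for every $A$, dominated convergence yields $\Exp_{i}[Y^{i^{*}}_{T_{A}}]\to\log p_{i^{*}}$. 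For the overshoot term, the nonlinear renewal argument already used in Lemma~\ref{leo3} (via \citet[Theorem 4.1]{Woodroofe-book82}) shows that $\chi_{A}$ converges weakly under $\Pro_{i}$ to $\cH_{i|i^{*}}$, whose mean is $\varkappa_{i|i^{*}}$ by \eqref{kappa}; combined with uniform integrability of $\{\chi_{A}\}$, this gives $\Exp_{i}[\chi_{A}]\to\varkappa_{i|i^{*}}$. Substituting both limits into the displayed identity yields \eqref{perf10}.

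The hard part will be the uniform integrability of the overshoots $\{\chi_{A}\}$, which is what upgrades the weak convergence of $\chi_{A}$ to convergence of its mean, and which is precisely where the finite second moment of $Z_{1}^{i^{*}}$ and the non-arithmetic assumption are needed. Following the nonlinear renewal theory of \citet{Woodroofe-book82} and \citet{LaiSieg-77,LaiSieg-79}, one bounds $\chi_{A}$ by the increment of the random walk $\{Z_{n}^{i^{*}}\}$ at $T_{A}$ plus a slowly-changing remainder, shows the former family is uniformly integrable under the second-moment condition, and controls the latter through \eqref{asy}. A secondary point requiring care is to confirm that the tail bound \eqref{asy} is genuinely uniform in $n$---which it is, since the maximal inequalities in the proof of Lemma~\ref{leo1} (Doob's inequality when $p_{i}>0$, and the Cram\'er bound under Condition~\ref{cond} when $p_{i}=0$) produce bounds independent of $n$---so that the dominating variable $M$ is indeed integrable.
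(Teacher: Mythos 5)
Your route is, in substance, the paper's route: the paper also reduces \eqref{perf10} to Woodroofe's nonlinear renewal theorem (Theorem 4.5 of \citet{Woodroofe-book82}) applied to the decomposition $Z_n = Z_n^{i^*} + Y_n^{i^*}$, and your Wald-identity computation together with the dominated-convergence treatment of $\Exp_i[Y^{i^*}_{T_A}]$ (via the integrable dominating variable $M=\sup_n (Y^{i^*}_n-\log p_{i^*})$ supplied by \eqref{asy}) amounts to verifying that theorem's uniform-integrability and convergence conditions on the slowly changing term, exactly as the paper does. But there is a genuine gap at the point you yourself flag as ``the hard part.'' Uniform integrability of the overshoots $\{\chi_A\}$ does \emph{not} follow from the finite second moment of $Z_1^{i^*}$ alone. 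Bounding $\chi_A \leq \xi^{+}_{T_A} + O(M)$ by the last random-walk increment $\xi_n = Z^{i^*}_n - Z^{i^*}_{n-1}$ is fine, but $T_A$ is not a first-passage time of the walk $\{Z^{i^*}_n\}$ itself, and for a general family of stopping times the variables $\xi_{T_A}$ need not be uniformly integrable: the natural Wald-type bound $\Exp_i\bigl[\xi^{+}_{T_A}\ind{\xi^{+}_{T_A}>\lambda}\bigr] \leq \Exp_i[T_A]\,\Exp_i\bigl[\xi^{+}_1\ind{\xi^{+}_1>\lambda}\bigr]$ blows up like $\log A$. What rescues the argument in nonlinear renewal theory is an additional hypothesis you never verify: the early-stopping condition $\Pro_i(T_A \leq N_A) = o(1/N_A)$ with $N_A = \lfloor \varepsilon \log A / (I_i - I_{ii^*})\rfloor$, condition (A4) of Woodroofe's theorem as stated in the paper's proof.

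Verifying that condition is where the paper's proof does most of its work, and it is not routine. Splitting $\{\max_{0\le k \leq N_A} Z_k \geq \log A\}$ into $\{\max_k Z^{i^*}_k \geq \tfrac12 \log A\}$ and $\{\max_k Y^{i^*}_k \geq \tfrac12\log A\}$, the $Y$-part is exponentially small by \eqref{probformaxY} and \eqref{asy}, but for the centered walk $S_k = Z^{i^*}_k - (I_i - I_{ii^*})k$, Doob's maximal submartingale inequality yields only $\Pro_i(\max_{k\leq N_A} S_k \geq \gamma N_A) = O(1/N_A)$ --- one order too weak. The paper upgrades this to $o(1/N_A)$ via a truncation argument combined with the CLT: since $S^2_{N_A}/N_A$ converges in distribution to $\sigma^2\chi^2$, one shows $\Exp_i\bigl[(S^2_{N_A}/N_A)\ind{\max_{k\le N_A} S_k > \gamma N_A}\bigr] \to 0$. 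Your appeal to ``following Woodroofe and Lai--Siegmund'' silently assumes this estimate; until you supply it (or an equivalent control on early stopping), the uniform integrability of $\{\chi_A\}$, and hence $\Exp_i[\chi_A] \to \varkappa_{i|i^*}$, is unproven. The rest of your argument --- finiteness of $\Exp_i[T_A]$, the identity $(I_i-I_{ii^*})\Exp_i[T_A] = \log A + \Exp_i[\chi_A] - \Exp_i[Y^{i^*}_{T_A}]$, the almost sure and $L^1$ convergence of $Y^{i^*}_{T_A}$ to $\log p_{i^*}$, and your observation that the bounds behind \eqref{asy} are uniform in $n$ --- is correct and matches the paper's verification of the remaining conditions.
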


\begin{proof}
Write $D_{i i^*}= I_{i}- I_{ii^*}$. Since $\{Z_{n}^{i^{*}}\}_{n\ge 1}$ is a $\Pro_{i}$-random walk whose increments have non-arithmetic distribution and positive mean $\Exp_{i}[Z_{1}^{i^{*}}]=D_{i i^*}$, asymptotic approximation (\ref{perf10}) follows from Woodroofe's nonlinear renewal theorem (see Theorem 4.5 in \citet{Woodroofe-book82}), as long as the the following conditions are satisfied:
\begin{enumerate}
\item[(A1)] $\{\max_{0 \leq k \leq n} |Y^{i^{*}}_{k+n} - \log p_{i^{*}}| \}_{n\ge 1}$ is a uniformly integrable sequence;
\item[(A2)] $\sum_{n=0}^{\infty} \Pro_{i}(|Y_{n}^{i^{*}}- \log p_{i^{*}}| \leq -n \varepsilon) < \infty$ for some $\varepsilon \in (0, D_{i i^*})$;
\item[(A3)] $\{Y^{i^{*}}_{n} -\log p_{i^{*}} \}_{n \ge 1}$ converges in distribution;
\item[(A4)]  $\Pro_{i}(T_{A} \leq N_{A}) =o(1/N_{A})$ as $A \rightarrow \infty$ for some $\varepsilon > 0$, where $N_{A}=\lfloor (\varepsilon \, \log A)/D_{i i^*} \rfloor$.
\end{enumerate}
Condition (A1) is satisfied because $\sup_{n} |Y_{n}^{i^{*}} - \log p_{i^{*}}| $ is $\Pro_{i}$-integrable. Indeed, from (\ref{asy}), which holds
if either $p_{i}>0$ or Condition \ref{cond} holds (see Lemma \ref{leo1}), we have
$$
\Exp_{i}\brcs{\sup_{n} |Y_{n}^{i^{*}} - \log p_{i^{*}}|}= \lim_{n} \int_{0}^{\infty}  \Pro_{i}\brc{\max_{0 \leq k \leq n} (Y_{k}^{i^{*}}-\log p_{i^{*}}) > x} \, dx<\infty.
$$
Condition (A2) is clearly  satisfied, since $Y_{n}^{i^{*}} \geq \log  p_{i^{*}}$ for every $n$, whereas condition (A3) is also satisfied, since 
$\{Y_{n}^{i^{*}} -\log p_{i^{*}}\}$ converges to 0 $\Pro_{i}$-a.s.

In order to verify  (A4), we start with the following inclusion, which holds for every $n \in \mathbb{N}$ and $x>0$,
$$
\set{\max_{0 \leq k \leq n} Z_{k} > x} \subset \set{\max_{0 \leq k \leq n } Z_{k}^{i^{*}} > x/2} \bigcup \set{\max_{0 \leq k \leq n }  Y_{k}^{i^{*}} > x/2 } 
$$
and which implies that
\begin{align*}
\Pro_{i}(T_{A} \leq N_{A}) &\leq
\Pro_{i}\brc{ \max_{0 \leq k \leq N_{A}} Z_{k}   \geq  \log A } 
\\
& \leq  \Pro_{i}\brc{ \max_{0 \leq k \leq N_{A}} Z^{i^{*}}_{k}   \geq  \log \sqrt{A} }
+ \Pro_{i}\brc{ \max_{0 \leq k \leq N_{A}}  Y_{k}^{i^{*}}  \geq  \log \sqrt{A} } .
\end{align*}
Therefore, it suffices to show that both terms on the right-hand side are of order $o(1/\log A)$ as $A \rightarrow \infty$.

Consider the second term. If $p_i > 0$, then $i^*=i$ and, by \eqref{probformaxY}, 
\[
 \Pro_{i}\brc{ \max_{0 \leq k \leq N_{A}} | Y_{k}^{i} -\log p_{i} | \geq  \log \sqrt{A} } \le  \frac{K-1}{ p_{i}(\sqrt{A} -1)} = O(A^{-1/2}).
\]
Now, if $p_i=0$ and Condition~\ref{cond} is satisfied, then by \eqref{asy} 
\[
 \Pro_{i}\brc{ \max_{0 \leq k \leq N_{A}} | Y_{k}^{i^*} -\log p_{i^{*}} | \geq  \log \sqrt{A} } = O(A^{-\gamma_{i^{*}}/2}) \quad \text{as}~~ A \to \infty.
\]

Finally, consider the first term. We have
\[
\begin{aligned}
\Pro_{i}\brc{ \max_{0 \leq k \leq N_{A}} Z_{k}^{i^*}   \geq  \log \sqrt{A} } & = \Pro_{i}\set{ \max_{0 \leq k \leq N_{A}} \brc{Z_{k}^{i^*} - D_{i i^*} N_A}   \geq  \frac{1}{2} \log A - D_{i i^*} N_A} 
\\
&= \Pro_{i}\set{ \max_{0 \leq k \leq N_{A}} \brc{Z_{k}^{i^*} - D_{i i^*}N_A}   \geq  \frac{1-2\varepsilon}{2\varepsilon} D_{i i^*} N_A}
\\
 &\le  \Pro_{i}\set{ \max_{0 \leq k \leq N_{A}} \brc{Z_{k}^{i^*} - D_{i i^*}k}   \geq \gamma N_A}
\end{aligned}
\]
 for some $\gamma >0$.  Write $S_k = Z_{k}^{i^*} - D_{i i^*}k$ and $\sigma^2=\Exp_i S_1^2$ (which is finite by the conditions of lemma).  Note that $\{S_k\}_{k\ge 1}$ is a zero-mean $\Pro_i$-martingale, so that $\{S_k^2\}_{k\ge 1}$ is a submartingale with respect to $\Pro_i$. Applying Doob's maximal submartingale inequality, we obtain
\[
\begin{aligned}
\Pro_i \brc{\max_{0 \leq k \leq N_{A}} |S_k| \ge \gamma N_A} & \le  \frac{1}{(\gamma N_A)^2} \Exp_i\brcs{S_{N_A}^2 \ind{\max\limits_{0 \leq k \leq N_{A}} S_{k} \ge  \gamma N_A}}
\\
&=\frac{1}{\gamma^2 N_A} \Exp_i\brcs{\brc{\frac{S_{N_A}^2}{N_A}} \ind{\max\limits_{0 \leq k \leq N_{A}} S_{k} \ge  \gamma N_A}} .
\end{aligned}
\]
First, it follows that
\[
\Pro_i\brc{\max_{0 \leq k \leq N_{A}} |S_n| \ge \gamma N_A} \le \frac{\sigma^2}{\gamma^2 N_A}\xrightarrow[N_A\to\infty]{} 0 .
\]
Now, we show that
\[
 \Exp_i\brcs{\brc{\frac{S_{N_A}^2}{N_A}} \ind{\max\limits_{0 \leq k \leq N_{A}} S_{k} \ge  \gamma N_A}}
\xrightarrow[N_A\to\infty]{} 0,
\]
as long as $\Exp_i S_1^2=\sigma^2 < \infty$, which implies that 
\[
\Pro_i\brc{\max\limits_{0 \leq k \leq N_{A}}|S_n| > \gamma N_A} = o(1/N_A) \quad \text{as}~~A\to\infty,
\]
i.e., the desired result. By the Central Limit Theorem, $S^2_{N_A}/(N_A \sigma^2)$ converges as $A\to\infty$ in distribution 
to a standard chi-squared random variable with one degree of freedom,  $\chi^2$. Hence, for any $L<\infty$ we have
\[
\begin{aligned}
\Exp_i\brc{\frac{S_{N_A}^2}{N_A}\ind{\max\limits_{0 \leq k \leq N_{A}} S_k > \gamma N_A}} &=
\Exp_i\brcs{\brc{L\wedge\frac{S_{N_A}^2}{N_A}}\ind{\max\limits_{0 \leq k \leq N_{A}} S_k > \gamma N_A}}
\\
&\quad +\Exp_i \brcs{\brc{\frac{S_{N_A}^2}{N_A}-L\wedge\frac{S_{N_A}^2}{N_A}}\ind{\max\limits_{0 \leq k \leq N_{A}} S_k > \gamma N_A}}
\\
&\le L \Pro_i\brc{\max\limits_{0 \leq k \leq N_{A}} S_k > \gamma N_A }+ \Exp_i \brc{\frac{S_{N_{A}}^2}{N_{A}}-L\wedge\frac{S_{N_{A}}^2}{N_{A}}}
\\
&\le \frac{L\sigma^2}{\varepsilon^2 N_{A}}+\sigma^2-\Exp_i \brc{L\wedge\frac{S_{N_A}^2}{N_A}}
\\
&\xrightarrow[N_A\to\infty]{}\sigma^2-\Exp_i \brc{L\wedge\chi^2\sigma^2} \xrightarrow[L\to\infty]{}\sigma^2(1-1)=0.
\end{aligned}
\]
The proof is complete.
\end{proof}

Now everything is prepared to obtain an asymptotic approximation for the expected sample size up to the negligible term $o(1)$.

\begin{theorem} \label{theo1}
Suppose that $Z_{1}^{i^{*}}$ has a non-arithmetic distribution with a finite second moment under $\Pro_{i}$ and that either $p_{i}>0$
or $p_i=0$ and $I_{i} > I_{ii^*}$ and Condition \ref{cond}  holds.  Then
\begin{equation} \label{perfect}
(I_{i}- I_{ii^*}) \; \Exp_{i}[T_{A}] = |\log \Pro_0(T_A<\infty)| + \log \Bigl(\sum_{i:p_{i}>0}  p_{i} \, \delta_{i} \Bigr) + \varkappa_{i|i^{*}} - \log p_{i^{*}}+o(1) \quad \text{as}~~ A\to \infty.
\end{equation}
\end{theorem}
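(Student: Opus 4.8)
The plan is to obtain \eqref{perfect} by gluing together the two asymptotic expansions established above: Lemma~\ref{leo4} already expresses $(I_{i}-I_{ii^*})\,\Exp_{i}[T_{A}]$ in terms of the threshold $A$, while Lemma~\ref{leo3} relates $\log A$ to the error probability $\Pro_{0}(T_{A}<\infty)$. Thus the only thing left to do is to trade $\log A$ for $|\log\Pro_{0}(T_{A}<\infty)|$ and to verify that doing so costs no more than an $o(1)$ term.

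First I would invoke Lemma~\ref{leo4}, whose hypotheses are exactly those of the theorem, to write
\begin{equation*}
(I_{i}-I_{ii^*})\,\Exp_{i}[T_{A}] = \log A + \varkappa_{i|i^{*}} - \log p_{i^{*}} + o(1) \quad \text{as}~~ A\to\infty .
\end{equation*}
Next I would apply Lemma~\ref{leo3}. Writing $\delta=\sum_{j:p_{j}>0}p_{j}\,\delta_{j}$, that lemma gives $A\,\Pro_{0}(T_{A}<\infty)\to\delta$ as $A\to\infty$. The limit $\delta$ is finite and \emph{strictly positive}, since at least one $p_{j}$ is positive and each $\delta_{j}=\int_{0}^{\infty}e^{-x}\,\cH_{j}(dx)>0$ because $\cH_{j}$ is a genuine probability distribution on $[0,\infty)$. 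Hence $\log$ is continuous at $\delta$, and taking logarithms of $A\,\Pro_{0}(T_{A}<\infty)\to\delta$ yields
\begin{equation*}
\log A + \log\Pro_{0}(T_{A}<\infty) = \log\delta + o(1) \quad \text{as}~~ A\to\infty .
\end{equation*}
Since $\Pro_{0}(T_{A}<\infty)\le 1/A< 1$ by Lemma~\ref{leo3}, we have $\log\Pro_{0}(T_{A}<\infty)=-|\log\Pro_{0}(T_{A}<\infty)|$, so the last display rearranges to $\log A = |\log\Pro_{0}(T_{A}<\infty)| + \log\delta + o(1)$. Substituting this into the expansion from Lemma~\ref{leo4}, and absorbing the sum of the two $o(1)$ terms into a single $o(1)$, gives exactly \eqref{perfect}.

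There is no substantive obstacle here---the real work is carried out in Lemmas~\ref{leo3} and~\ref{leo4}, and the present argument is merely the bookkeeping that converts a threshold-based statement into an error-probability-based one. The only two points needing (minor) care are: (i) the strict positivity of $\delta$, which ensures that the logarithm is evaluated at an interior point and that the two $o(1)$ terms compose legitimately; and (ii) the non-arithmeticity hypothesis, since invoking Lemma~\ref{leo3} requires the distribution of $Z_{1}^{j}$ to be non-arithmetic under $\Pro_{j}$ for \emph{every} active index $j$, and not merely for the single index $i^{*}$ appearing in the theorem's hypothesis; I would either take this as a standing assumption or state it explicitly where Lemma~\ref{leo3} is applied.
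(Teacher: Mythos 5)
Your proof is correct and takes essentially the same approach as the paper: the paper's proof likewise combines the expansion \eqref{perf10} of Lemma~\ref{leo4} with the relation $\log A = |\log \Pro_{0}(T_{A}<\infty)| + \log\bigl(\sum_{i:p_{i}>0} p_{i}\,\delta_{i}\bigr) + o(1)$ obtained by taking logarithms in \eqref{aaaa} of Lemma~\ref{leo3}. Your point (ii) is a fair observation---the theorem's hypotheses mention non-arithmeticity only for $Z_{1}^{i^{*}}$ while Lemma~\ref{leo3} needs it for every active index, a detail the paper leaves implicit---but it does not change the substance of the argument.
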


\begin{proof}
Using (\ref{aaaa}), we obtain 
\begin{equation} \label{a}
\log A = |\log  \Pro_{0}(T_{A}< \infty) | + \log \Bigl(\sum_{i: p_{i}>0}    p_{i} \, \delta_{i} \Bigr) + o(1).
\end{equation}
We can then obtain (\ref{perfect}) combining  (\ref{perf10}) and (\ref{a}).
\end{proof}

\begin{remark} \label{remar}
If the desired error probability $\Pro_0(T_A<\infty)=\alpha$ is fixed in advance, usually  it is not possible to choose the threshold $A=A_\alpha$
so that $T_{A}$ is a test of size $\alpha$, i.e., so that $\Pro_{0}(T_{A}<\infty)$ is  exactly equal to $\alpha$. Nevertheless, if $A = \alpha^{-1} \sum_{i=1}^{K} p_{i}\,  \delta_{i}$, then from (\ref{aaaa}) and \eqref{perfect} we have 
\begin{align}
\Pro_{0}(T_{A} < \infty) & =\alpha (1+o(1)) , \nonumber
\\
(I_{i}- I_{ii^*}) \; \Exp_{i}[T_{A}] & = |\log \alpha| + \log \Bigl(\sum_{i:p_{i}>0}  p_{i} \, \delta_{i} \Bigr) + \varkappa_{i|i^{*}} - \log p_{i^{*}}+o(1) \quad \text{as}~~ \alpha\to 0 . \label{perfectalpha}
\end{align}

\end{remark}

The following corollary specializes Theorem \ref{theo1} in the case that $p_{i}>0$.

\begin{corollary} \label{cor1}
Suppose that $p_{i}>0$ and that $Z_{1}^{i}$ has a non-arithmetic distribution with a finite second moment under $\Pro_{i}$.
If $\Pro_{0}(T_{A}<\infty)=\alpha$, then
\begin{equation} \label{perfect2}
I_{i} \; \Exp_{i}[T_{A}] = |\log \alpha| + \log \Bigl(\sum_{i:p_{i}>0}  p_{i} \, \delta_{i} \Bigr) + \varkappa_{i} - \log p_{i}+o(1) \quad \text{as}~~ \alpha\to 0
\end{equation}
and $T_{A}$ is second-order asymptotically optimal under $\Pro_{i}$, that is,
\begin{equation}
\Exp_{i}[T_{A}]= \inf_{T\in \cca} \Exp_{i}[T]+ O(1) \quad \text{as}~~ \alpha\to 0.
\end{equation}
\end{corollary}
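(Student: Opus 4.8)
The plan is to obtain both assertions as direct specializations of Theorem~\ref{theo1} to the fully-supported case. The key observation is that when $p_i > 0$, the index $i^*$ defined in \eqref{star} coincides with $i$: indeed $i^* = \arg\min_{j: p_j>0} I_{ij}$, and since each $I_{ij} \ge 0$ with equality precisely at $j = i$, the minimizer is $i$ itself. In particular the hypotheses of the corollary (that $Z_1^i$ be non-arithmetic with a finite second moment under $\Pro_i$) are exactly the hypotheses of Theorem~\ref{theo1} once we read $i^* = i$. From the definition \eqref{KL} this identification forces $I_{ii^*} = I_{ii} = I_i - \Exp_i[Z_1^i] = 0$, so the leading coefficient $I_i - I_{ii^*}$ in \eqref{perfect} collapses to the Kullback--Leibler divergence $I_i$; likewise $\varkappa_{i|i^*} = \varkappa_{i|i} = \varkappa_i$ and $p_{i^*} = p_i$.

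With these identities in hand, I would substitute them directly into \eqref{perfect}. Invoking the hypothesis $\Pro_0(T_A < \infty) = \alpha$, so that $|\log \Pro_0(T_A < \infty)| = |\log \alpha|$, immediately produces \eqref{perfect2}. No further estimation is required at this stage, since all the analytic content---the nonlinear-renewal expansion of $\Exp_i[T_A]$ and the change-of-measure evaluation of $\Pro_0(T_A < \infty)$---has already been established in Lemmas~\ref{leo3}--\ref{leo4} and Theorem~\ref{theo1}.

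For the second-order optimality, I would compare \eqref{perfect2} with the optimal benchmark recorded just after \eqref{per1}, namely
\begin{equation*}
\inf_{T \in \cca} \Exp_i[T] = \frac{1}{I_i}\brcs{|\log \alpha| + \log \delta_i + \varkappa_i} + o(1) \quad \text{as}~~ \alpha \to 0 .
\end{equation*}
Dividing \eqref{perfect2} through by $I_i$ and subtracting, the leading terms $|\log \alpha|$ and $\varkappa_i$ cancel, leaving
\begin{equation*}
\Exp_i[T_A] - \inf_{T \in \cca} \Exp_i[T] = \frac{1}{I_i}\log\brc{\frac{\sum_{j: p_j>0} p_j \delta_j}{p_i \delta_i}} + o(1),
\end{equation*}
a fixed finite constant plus a vanishing term, hence $O(1)$ as $\alpha \to 0$. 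This is exactly the claimed second-order asymptotic optimality. The argument presents no genuine obstacle; the only point worth flagging is the verification that $I_{ii} = 0$, which is what restores $I_i$ as the exact leading coefficient and makes the drift-correction term of Theorem~\ref{theo1} vanish in the fully-supported case.
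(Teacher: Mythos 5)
Your proposal is correct and is exactly the argument the paper intends: Corollary~\ref{cor1} is stated as a direct specialization of Theorem~\ref{theo1}, obtained by noting $i^*=i$, $I_{ii^*}=0$, $\varkappa_{i|i^*}=\varkappa_i$, $p_{i^*}=p_i$ when $p_i>0$, and then comparing \eqref{perfect2} with the benchmark $\inf_{T\in\cca}\Exp_i[T]=\frac{1}{I_i}\brcs{|\log\alpha|+\log(\delta_i e^{\varkappa_i})}+o(1)$ recorded after \eqref{per1}. Your explicit identification of the limiting gap as $\frac{1}{I_i}\log\bigl(\sum_{j:p_j>0}p_j\delta_j/(p_i\delta_i)\bigr)$ is a correct and slightly more informative rendering of the $O(1)$ term than the paper makes explicit.
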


This corollary implies that the performance loss of a mixture rule is bounded as $A \rightarrow \infty$ under every $\Pro_{i} \in \cP$, 
as long as $p_{i}>0$ for every $i=1,\ldots,K$. However, when the number of ``active'' components in the mixing distribution,
$\tilde{K}=\#\{p_{i}: p_{i}>0\}$, is very large, only first-order asymptotic optimality can be attained. This is the content of the following corollary of Theorem \ref{theo1}.

\begin{corollary} \label{cor11}
Suppose that $p_{i}>0$ and that $Z_{1}^{i}$ has a non-arithmetic distribution with a finite second moment under $\Pro_{i}$. If 
$\Pro_{0}(T_{A}<\infty)=\alpha$ and $\tilde{K} \rightarrow \infty$ so that $\log \tilde{K}=o(|\log \alpha|)$,  
then $T_{A}$ is first-order asymptotically optimal under $\Pro_{i}$, i.e.,  $\Exp_{i}[T_{A}]= \inf_{T\in \cca} \Exp_{i}[T] \, (1+o(1))$ as $\alpha\to 0$. 
\end{corollary}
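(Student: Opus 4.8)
The plan is to play off the sharp expansion of $\Exp_i[T_A]$ from Corollary~\ref{cor1} against the optimal benchmark $\inf_{T\in\cca}\Exp_i[T]$ and to verify that their difference is of smaller order than the shared leading term $|\log\alpha|$. Since $p_i>0$ and $\Pro_0(T_A<\infty)=\alpha$, I would start from \eqref{perfect2}, namely
\[
I_i\,\Exp_i[T_A] = |\log\alpha| + \log\Bigl(\sum_{j:p_j>0} p_j\,\delta_j\Bigr) + \varkappa_i - \log p_i + o(1),\qquad \alpha\to 0 ,
\]
and pair it with the optimal benchmark: the one-sided SPRT $T_A^i$ is exactly optimal under $\Pro_i$ by \eqref{optimum}, so \eqref{per1} gives
\[
I_i \inf_{T\in\cca}\Exp_i[T] = |\log\alpha| + \log\delta_i + \varkappa_i + o(1),\qquad \alpha\to 0 .
\]

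Subtracting the two displays, the leading terms $|\log\alpha|$ and the constants $\varkappa_i$ cancel, leaving
\[
I_i\bigl(\Exp_i[T_A]-\inf_{T\in\cca}\Exp_i[T]\bigr) = \log\Bigl(\frac{\sum_{j:p_j>0} p_j\,\delta_j}{p_i\,\delta_i}\Bigr) + o(1).
\]
The crucial step is to sandwich this logarithm. Because every $\delta_j\in(0,1]$ and $\sum_{j:p_j>0}p_j=1$, retaining only the $i$-th summand in the numerator and bounding the full sum by $1$ gives $p_i\delta_i\le \sum_{j:p_j>0}p_j\delta_j\le 1$, hence
\[
0 \le \log\Bigl(\frac{\sum_{j:p_j>0} p_j\,\delta_j}{p_i\,\delta_i}\Bigr) \le -\log(p_i\delta_i) = |\log p_i| + |\log\delta_i| .
\]
Here $\delta_i$ is a fixed constant in $(0,1]$ determined by $F_i$ and $F_0$, so $|\log\delta_i|=O(1)$; and for (nearly) uniform weights $p_i\asymp 1/\tilde K$ one has $|\log p_i| = \log\tilde K + O(1)$. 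The gap is therefore at most $\log\tilde K + O(1)$, which by the hypothesis $\log\tilde K=o(|\log\alpha|)$ is $o(|\log\alpha|)$.

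It then remains to divide by $\inf_{T\in\cca}\Exp_i[T]$. By the second display this quantity equals $|\log\alpha|/I_i\,(1+o(1))$ and hence is of order $|\log\alpha|$, so
\[
\frac{\Exp_i[T_A]}{\inf_{T\in\cca}\Exp_i[T]} = 1 + \frac{I_i^{-1}\,o(|\log\alpha|)}{I_i^{-1}|\log\alpha|\,(1+o(1))} = 1 + o(1),
\]
which is exactly first-order asymptotic optimality. The main obstacle is precisely the role of the hypothesis $\log\tilde K=o(|\log\alpha|)$: the second-order correction $-\log p_i$ grows like $\log\tilde K$ under comparable weights, and only this smallness guarantees that it is absorbed by the leading term $|\log\alpha|$; without it, the gap stays of the same order as the mean sample size and optimality fails. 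A secondary point deserving attention is uniformity—the $o(1)$ remainders in \eqref{perfect2} and \eqref{per1} were obtained by nonlinear renewal theory for a fixed mixture, so one should check they stay $o(|\log\alpha|)$ as $\tilde K=\tilde K(\alpha)\to\infty$; since they are dominated by the explicitly controlled gap $\log\tilde K + O(1)$, this does not disturb the first-order conclusion.
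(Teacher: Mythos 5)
Your proof is correct and takes essentially the same route as the paper: the paper offers no separate argument for Corollary~\ref{cor11}, presenting it as an immediate consequence of Theorem~\ref{theo1} via the expansion \eqref{perfect2} compared against the benchmark \eqref{per1}, which is exactly the subtraction you perform, with the gap $\log\bigl(\sum_j p_j\delta_j/(p_i\delta_i)\bigr)\le \log\tilde{K}+O(1)$ absorbed into the leading term $|\log\alpha|$. Your two explicit caveats---that the bound $-\log p_i = \log\tilde{K}+O(1)$ presumes near-uniform weights $p_i \asymp 1/\tilde{K}$, and that the $o(1)$ remainders from nonlinear renewal theory must stay controlled as $\tilde{K}=\tilde{K}(\alpha)\to\infty$---merely make explicit assumptions the paper leaves implicit.
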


\subsection{A Nearly Minimax Discrete Mixture Rule}\label{ss:TOminimax}

The proof of minimaxity is constructed based on an auxiliary Bayesian approach. The method is ideologically similar to that used by \citet{Lorden-AS77} and goes back to the proof of optimality of Wald's SPRT given by \citet{WaldWolfowitz48}. 

More specifically, consider the following Bayesian problem denoted by $\cB(\pi, \{p_i\},c)$. Let $\pi \in (0,1)$ be the prior probability of the null hypothesis $\Hyp_0: \Pro=\Pro_{0}$, and assume that the losses associated with stopping at time $T$ are 1 if $T<\infty$ and the hypothesis $\Hyp_0$ is true and $(c \cdot I_i) \times T$ if $\Pro_i$ is the true probability measure, where $c>0$ is a fixed constant. 
Therefore, the cost of every observation under $\Pro_{i}$ is proportional to  the difficulty of  discriminating between $F_{i}$ and $F_{0}$ measured by the Kullback--Leibler divergence $I_{i}$. Since the prior probability of the alternative hypothesis $\Hyp_{1}: \Pro \in \cP=\bigcup_{i=1}^K \Pro_{i}$ is $(1-\pi) \sum_{i=1}^K p_{i}=(1-\pi)$, the Bayes (integrated) risk associated with an arbitrary stopping time $T$ is
\begin{equation} \label{bayes}
\cR_{c}(T) =  \pi \,  \Pro_{0}(T < \infty) +  c \, (1-\pi) \,  \sum_{i=1}^{K} p_{i} \; I_{i} \, \Exp_{i}[T] .
\end{equation}
Moreover, for any positive constant $Q$ such that $Qc<\pi$, we consider the mixture rule $T_{A_{Qc}}$, where
\begin{equation} \label{qc}
A_{Qc}= \Bigl(\frac{1-Qc}{Qc} \Bigr) \Big/ \Bigl( \frac{1-\pi}{\pi} \Bigr).
\end{equation}
These stopping times have a  natural Bayesian interpretation.
Indeed, write $\Pro^p=\sum_{i=1}^{K} p_{i} \, \Pro_{i}$ and $\Prop^{\pi} = \pi \, \Pro_{0} + (1-\pi) \, \Pro^p$. Then 
\[
\Prop^{\pi}(\cdot \, | \,  \Hyp_0)= \pi \, \Pro_{0}(\cdot) , \quad \Prop^{\pi}(\cdot  \, | \,  \Hyp)= (1-\pi) \, \Pro^p(\cdot) ,
\]
and the posterior probability of the hypothesis $\Hyp_0$ takes the form
$$
\Pi_{n} = \Prop^{\pi} (\Hyp_0 | \Fc_{n})= \frac{1}{1+ \frac{1-\pi}{\pi} \Lambda_{n}} ,  \quad n \in \mathbb{N}.
$$
Thus, $T_{A_{Qc}}$ is the first time that the posterior probability of the null hypothesis becomes smaller than $Qc$, that is,
\begin{equation} \label{repre}
T_{A_{Qc}}= \inf\{n \geq 1: \Lambda_{n} \geq A_{Qc} \} = \inf\{n \geq 1: \Pi_{n} \leq Qc \} .
\end{equation}

Solution of $\cB(\pi, \{p_i\},c)$ requires minimization of the expected loss \eqref{bayes}. In the following lemma we establish Bayesian optimality of the mixture test $T_{A_{Qc}}$ in the problem $\cB(\pi, \{p_i\},c)$ for sufficiently small $c$.

\begin{lemma} \label{leo5}
For any given $\pi \in (0,1)$ and $Q> 1/e$, there exists  $c^{*}$ such that 
\[
\cR_{c}(T_{A_{Qc}})= \inf_{T} \cR_{c}(T) \quad \text{for every}~~c< \pi c^{*},
\]
where infimum is taken over all stopping times.
\end{lemma}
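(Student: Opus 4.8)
The plan is to recast the Bayesian problem $\cB(\pi,\{p_i\},c)$ as an optimal stopping problem for the posterior process under the averaged measure $\Prop^{\pi}=\pi\,\Pro_0+(1-\pi)\sum_i p_i\,\Pro_i$, and then to invoke the dynamic-programming characterization of Bayes rules in the spirit of \citet{WaldWolfowitz48} and \citet{Lorden-AS77}. Writing $\Pi_n^{i}=\Prop^{\pi}(\Pro_i\mid\Fc_n)$ for the posterior probability that $\Pro_i$ is the true measure, I would first establish the risk decomposition
\[
\cR_c(T)=\Expop^{\pi}\Big[\Pi_T\,\ind{T<\infty}+c\sum_{k=0}^{T-1}\sum_{i=1}^{K} I_i\,\Pi_k^{i}\Big].
\]
This follows from two elementary change-of-measure identities: $\pi\,\Pro_0(T<\infty)=\Expop^{\pi}[\Pi_T\,\ind{T<\infty}]$, which uses that $\{\Pi_n\}$ is a bounded $\Prop^{\pi}$-martingale together with optional sampling and the tower property, and $(1-\pi)p_i\,\Exp_i[T]=\Expop^{\pi}\big[\sum_{k=0}^{T-1}\Pi_k^{i}\big]$, which uses $\{T>k\}\in\Fc_k$. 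The problem thus takes the canonical form of optimal stopping with terminal (stopping) cost $\Pi_n$—the posterior probability of a false rejection—and per-step continuation cost $c\,\rho_n$, where $\rho_n=\sum_i I_i\,\Pi_n^{i}$ is the posterior-averaged Kullback--Leibler cost.

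Next I would appeal to the general theory of optimal stopping. The Bayes risk-to-go $W_n$ (the infimum of the expected remaining cost given $\Fc_n$) satisfies the Bellman equation $W_n=\min\{\Pi_n,\;c\rho_n+\Expop^{\pi}[W_{n+1}\mid\Fc_n]\}$, and the rule that stops upon first entry into the stopping region $\{W_n=\Pi_n\}$ is Bayes. The decisive structural observation, exactly as in the Wald--Wolfowitz concavity argument, is that, viewed as a function of the posterior vector on the probability simplex, the value function is \emph{concave} (it is an infimum of risks, each affine in the prior), whereas the stopping cost $\Pi$ and the running cost $\sum_i I_i\,\Pi^{i}$ are both affine. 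Hence the stopping region is convex; moreover it contains every alternative vertex $\{\Pi^{i}=1\}$ (where stopping costs $0$) and excludes the null vertex $\{\Pi=1\}$ (where it is strictly better never to stop, thereby avoiding the unit false-rejection penalty). The remaining task is to show that this convex region is exactly the slab $\{\Pi_n\le Qc\}$ and to identify the constant, i.e.\ that the optimal boundary is a single threshold on the one coordinate $\Pi_n$.

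To pin the threshold I would analyze the stationary (time-homogeneous, infinite-horizon) problem and exploit that for small $c$ the equivalent threshold $A_{Qc}$ is large, so that the continuation value near the null vertex can be controlled by the overshoot/renewal quantities already developed (the $\delta_i$ and $\varkappa_i$ of Lemmas~\ref{leo3}--\ref{leo4}), and then match the stopping cost to the continuation cost along the boundary. This is where the hypotheses enter: the standing constraint $Qc<\pi$ keeps the boundary strictly below the initial posterior $\Pi_0=\pi$, so that stopping at time $0$ is never forced; the condition $Q>1/e$ places the candidate constant threshold in the range where the stopping-versus-continuation comparison has the correct sign uniformly over the posterior configuration; and $c<\pi c^{*}$ makes the matching exact rather than merely asymptotic.

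The main obstacle is precisely this reduction from the multidimensional posterior to a one-dimensional threshold on $\Pi_n$. Because the running cost $\rho_n=\sum_i I_i\,\Pi_n^{i}$ depends on the full posterior configuration and not on $\Pi_n$ alone (unless all the $I_i$ coincide), convexity of the stopping region does not by itself force it to be a half-space $\{\Pi_n\le Qc\}$. Showing that the Kullback--Leibler weighting of the sampling cost is exactly what collapses the optimal boundary onto a level set of $\Pi_n$, and then extracting the precise level $Qc$ together with the admissible range $Q>1/e$ and the bound $c<\pi c^{*}$, is the crux of the argument; the renewal-theoretic expansions established above are the natural tools for carrying out the final matching.
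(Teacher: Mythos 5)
There is a genuine gap here, and it is the one you flag yourself --- but it is worse than a missing step: the route you propose cannot work. Your risk decomposition $\cR_c(T)=\Expop^{\pi}\bigl[\Pi_T\,\ind{T<\infty}+c\sum_{k=0}^{T-1}\sum_{i=1}^{K} I_i\,\Pi_k^{i}\bigr]$ is correct (the first identity matches the paper's (\ref{error})), as are the standard concavity/convex-stopping-region facts. But the plan then requires identifying the dynamic-programming stopping region $\{W_n=\Pi_n\}$ with the slab $\{\Pi_n\le Qc\}$. That identification is impossible as stated: the lemma asserts optimality of $T_{A_{Qc}}$ for \emph{every} fixed $Q>1/e$ once $c$ is small, and different values of $Q$ give genuinely different threshold rules, which cannot all coincide with the single, well-defined Bellman stopping region. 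Already for $K=1$ the exactly optimal posterior threshold is pinned (near $c/\delta_1$, by the Lorden-type analysis), not free to equal $Qc$ for arbitrary $Q$. So no boundary-shape argument can produce the constant $1/e$, and your own observation that the running cost $\sum_i I_i\Pi_n^{i}$ depends on the full posterior configuration is precisely why the exact one-dimensional reduction fails; the paper never attempts it.

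The paper's proof is much softer and bypasses dynamic programming entirely: it compares risks against the constant $\pi$ rather than characterizing the Bayes rule. Restricting (as the paper argues one may) to stopping times that terminate $\Prop^{\pi}$-a.s.\ --- since a rule that fails to terminate under the alternative mixture has infinite sampling cost --- optional stopping of the bounded $\Prop^{\pi}$-martingale $\{\Pi_n\}$ gives $\cR_c(T)\ge \Expop^{\pi}[\Pi_T]=\pi$ for every such competitor. On the other side, $\pi\,\Pro_0(T_{A_{Qc}}<\infty)=\Expop^{\pi}[\Pi_{T_{A_{Qc}}}]\le Qc$ (inequality (\ref{err})), and the expansion of Lemma~\ref{leo4} supplies a uniform bound $I_i\,\Exp_i[T_A]\le \log A + C$, whence $\cR_c(T_{A_{Qc}})\le\pi$ for all $c\le\pi c^{*}$ with the explicit constant $c^{*}=\frac{Qe-1}{Qe}\cdot\frac{1}{e^{-1}+Q+C}$. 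The hypothesis $Q>1/e$ enters only to make $c^{*}$ positive, via the elementary bound $\sup_{0<x<1} x|\log x|\le e^{-1}$ --- not through any sign condition along an optimal boundary. So the two ingredients you would actually need are the martingale lower bound $\pi$ for the competing class and the already-established upper expansion (\ref{perf10}) for the mixture rule's sampling cost; the renewal-theoretic ``final matching'' you defer to is never needed, and no characterization of the Bayes stopping region is ever made.
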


The proof of Lemma \ref{leo5} is methodologically similar to  the proof of Lemma 2 in \citet{Pollak-AS78} (see also \citet{Lorden-AS67}) 
and is presented in the Appendix. This lemma provides the basis for the following important theorem,  which shows that a particular mixing distribution leads to a mixture rule that is  \textit{almost minimax} in the sense of minimizing the Kullback--Leibler information in the worst-case scenario up to an $o(1)$ term.

\begin{theorem} \label{theo2}
Let $\cca=\{T: \Pro_0 (T< \infty) \le \alpha\}$ be the class of stopping times whose  ``error probabilities'' are at most $\alpha$, $0 <\alpha < 1$. 
Suppose that $\Exp_i |Z_1|^2 < \infty$ and that $Z_1$ is $\Pro_i$-non-arithmetic. Then
\begin{equation}  \label{LB}
\inf_{T \in \cca} \;  \max_{i=1, \ldots, K} \;  I_{i} \,  \Exp_{i}[T]   \geq |\log \alpha| + \log \Bigl( \sum_{i=1}^{K}  \delta_{i} \, e^{\varkappa_{i}}  \Bigr) +o(1) \quad \text{as}~~ \alpha \to 0,
\end{equation}
and this asymptotic lower bound is attained by the mixture rule $T_A=T_A(p^0)$ defined in \eqref{rule2} whose mixing distribution is
\begin{equation} \label{prior}
p_{i}^0=\frac{e^{\varkappa_{i}}}{\sum_{i=1}^{K} e^{\varkappa_{i}}} \; ,  \quad \quad i=1, \ldots, K
\end{equation}
and whose error probability is exactly equal to $\alpha$, i.e., the threshold $A=A_\alpha$ is selected in such a way that $\Pro_0 (T_A(p^0)<\infty)=\alpha$.
\end{theorem}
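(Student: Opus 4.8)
The statement has two halves --- a lower bound \eqref{LB} and its attainment by $T_A(p^0)$ --- and I would treat attainment first since it is immediate from Corollary \ref{cor1}. Write $S=\sum_{j=1}^K e^{\varkappa_j}$ and $D=\sum_{j=1}^K \delta_j e^{\varkappa_j}$, so that $p_i^0=e^{\varkappa_i}/S>0$ for all $i$ and \eqref{perfect2} applies. Substituting $\log p_i^0=\varkappa_i-\log S$ makes $\varkappa_i-\log p_i^0=\log S$ independent of $i$, and together with $\sum_j p_j^0\delta_j=D/S$ this collapses \eqref{perfect2} to $I_i\Exp_i[T_A]=|\log\alpha|+\log(D/S)+\log S+o(1)=|\log\alpha|+\log D+o(1)$ for every $i$. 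Because this common value is the same across all $i$, the maximum equals it, which is precisely the right-hand side of \eqref{LB}. (This is the sense in which $p^0$ equalizes the Kullback--Leibler information over the alternatives.)

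The lower bound is the crux and relies on the auxiliary Bayes problem $\cB(\pi,\{p_i^0\},c)$ and on Lemma \ref{leo5}. For an arbitrary $T\in\cca$ I would bound the maximum from below by the $p^0$-weighted average and read that average off from the definition \eqref{bayes} of the risk:
\[
\max_i I_i\Exp_i[T]\ \ge\ \sum_i p_i^0 I_i\Exp_i[T]\ =\ \frac{\cR_c(T)-\pi\,\Pro_0(T<\infty)}{c(1-\pi)}.
\]
Using $\Pro_0(T<\infty)\le\alpha$ together with the Bayes optimality $\cR_c(T)\ge\cR_c(T_{A_{Qc}})$ from Lemma \ref{leo5} gives, for every admissible $(Q,c)$ with $Q>1/e$ and $Qc<\pi$,
\[
\max_i I_i\Exp_i[T]\ \ge\ \frac{\cR_c(T_{A_{Qc}})-\pi\alpha}{c(1-\pi)}.
\]

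The next step is to expand $\cR_c(T_{A_{Qc}})$ as $c\to0$. By Lemma \ref{leo3} the stopping-error term is $\pi\Pro_0(T_{A_{Qc}}<\infty)=(1-\pi)Qc\,(D/S)(1+o(1))$, and by Lemma \ref{leo4} (again with the telescoping $\varkappa_i-\log p_i^0=\log S$) the sampling term is $c(1-\pi)[\log A_{Qc}+\log S+o(1)]$. Dividing by $c(1-\pi)$ and inserting $\log A_{Qc}=|\log c|+\log(\pi/((1-\pi)Q))+o(1)$ reduces the bound to
\[
\max_i I_i\Exp_i[T]\ \ge\ |\log c|-\frac{\pi\alpha}{(1-\pi)c}+\log\frac{\pi}{1-\pi}+Q\frac{D}{S}-\log Q+\log S+o(1).
\]

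Finally I would optimize the free parameters, whose values turn out to be dictated by the problem data. Maximizing over $c$ yields $c=\pi\alpha/(1-\pi)$, which sends $|\log c|-\pi\alpha/((1-\pi)c)+\log(\pi/(1-\pi))$ to $|\log\alpha|-1$ (all $\pi$-dependence cancels); maximizing over $Q$ yields $Q=S/D$, which sends $Q(D/S)-\log Q$ to $1+\log(D/S)$. Adding the remaining $\log S$ telescopes the bound to $|\log\alpha|+\log D+o(1)$, i.e.\ \eqref{LB}. It remains to check that the optimizers are admissible: since $\delta_i\le1$ for all $i$ we have $Q=S/D\ge1>1/e$, and for small $\alpha$ the chosen $c=\pi\alpha/(1-\pi)$ satisfies $Qc<\pi$ and $c<\pi c^*$, so that Lemma \ref{leo5} applies. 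I expect the main difficulty to be the careful bookkeeping of the $o(1)$ terms once $c$ is coupled to $\alpha$: the expansions of Lemmas \ref{leo3}--\ref{leo4} are $o(1)$ as $A_{Qc}\to\infty$, and one must confirm they remain $o(1)$ after division by $c(1-\pi)$ and after the substitution $c=c(\alpha)\to0$, as well as confirm that the optimizing pair $(Q,c)$ stays in the admissible region $\{Q>1/e,\ Qc<\pi\}$ throughout the limit.
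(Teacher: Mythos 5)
Your attainment half is exactly the paper's: substitute $p^0$ into \eqref{perfect2}, observe that $\varkappa_i-\log p_i^0=\log S$ is free of $i$ (with your $S=\sum_j e^{\varkappa_j}$, $D=\sum_j\delta_j e^{\varkappa_j}$), and read off the equalized value $|\log\alpha|+\log D+o(1)$. Your lower bound uses the same machinery as the paper --- the auxiliary Bayes problem \eqref{bayes}, Lemma \ref{leo5}, the average-below-maximum step, and the expansions of Lemmas \ref{leo3}--\ref{leo4} --- but a different parametrization. The paper fixes $\pi=1/2$ and couples $c$ to $\alpha$ by choosing $c$ so that $\Pro_0(T_{A_{Qc}}<\infty)=\alpha$ \emph{exactly}; then the type-I terms $\alpha/2$ cancel identically on both sides of the risk inequality, the bound collapses to $\sum_i p_i^0 I_i\Exp_i[T_{A_{Qc}}]$ with no expansion of the comparison rule's error probability and no tuning of free parameters, and the answer is $Q$-independent. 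You instead leave $(\pi,Q,c)$ free, expand $\cR_c(T_{A_{Qc}})$, and tune. Your tuned pair reproduces the paper's configuration asymptotically: at $c=\pi\alpha/(1-\pi)$ and $Q=S/D$ one has $\Pro_0(T_{A_{Qc}}<\infty)=Q(D/S)\alpha(1+o(1))=\alpha(1+o(1))$, so your comparison rule is the paper's up to $o(1)$, and your $c$-optimization is a genuine maximization whose value correctly kills the $\pi$-dependence.

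One step of your narrative is backwards, and it conceals a real tension you should confront. The map $Q\mapsto Q(D/S)-\log Q$ is convex with its \emph{minimum} at $Q=S/D$; your choice is the minimizer, not the maximizer, of your family of bounds. If your displayed inequality really held for every fixed $Q>1/e$ --- as it would if Lemma \ref{leo5} were literally valid for all such $Q$ --- then letting $Q\to\infty$ would produce a lower bound strictly exceeding the value $|\log\alpha|+\log D+o(1)$ attained by $T_A(p^0)$, an absurdity. The resolution is visible in your own expansion: for $Q$ bounded away from $S/D$, the risk of $T_{A_{Qc}}$ exceeds that of the best rule in the threshold family by $c(1-\pi)\bigl[(Q D/S-\log Q)-(1+\log(D/S))\bigr]=\Theta(c)$, which does not vanish after division by $c(1-\pi)$; hence exact (or even $o(c)$-almost) Bayes optimality of $T_{A_{Qc}}$ can only be relied upon with the correctly tuned constant. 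Your proof therefore should be presented not as ``the bound holds for all admissible $(Q,c)$, now optimize,'' but as a single instantiation at $(Q,c)=(S/D,\,\pi\alpha/(1-\pi))$ --- which is precisely where the paper's exact-$\alpha$ coupling places its own comparison rule, so at that point you are leaning on Lemma \ref{leo5} in exactly the way the paper does. With that restatement (and your own check that $Q=S/D\ge 1>1/e$ since $\delta_i\le 1$, and that $c<\pi c^*$ eventually), your bookkeeping of the $o(1)$ terms goes through and the proof is correct.
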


\begin{proof}
Let $\{p_{i}\}$ be an arbitrary mixing distribution, $\pi=1/2$, $Q>1/e$ and choose $c< 1/ 2Q$ so that $\Pro_{0}(T_{A_{Qc}}<\infty)=\alpha$ (recall the definition of $A_{Qc}$ in (\ref{qc})). Then from (\ref{err}) in the appendix it follows that $\alpha \leq 2 \, Q \, c$ and  from the definition of $\cR_{c}$  we obtain the following inequality:
\begin{equation} \label{start}
\frac{\alpha}{2} + \frac{c}{2}  \; \inf_{T \in \cca} \; \max_{i=1, \ldots, K} \;  I_{i}  \, \Exp_{i}[T] \geq  \inf_{T \in \cca} \, \cR_{c}(T).
\end{equation}
By  Lemma \ref{leo5}, there exists  $c^{*}<1/Q$  such that for every $c < c^{*}/2$ (and consequently for every $\alpha < Q c^{*}$):
\begin{equation} \label{bayes1}
\inf_{T \in \cca}  \cR_{c}(T) =  \cR_{c}(T_{A_{Qc}})= \frac{\alpha}{2} +   \frac{c}{2} \,  \sum_{i=1}^{K} p_{i} \; I_{i} \, \Exp_{i}[T_{A_{Qc}}].
\end{equation}
Consequently, from (\ref{start}) and (\ref{bayes1}) it follows that
\begin{equation}
\inf_{T \in \cca} \; \max_{i=1,\ldots,K} \;  I_{i}  \, \Exp_{i}[T] \geq \sum_{i=1}^{K} p_{i} \; I_{i} \, \Exp_{i}[T_{A_{Qc}}].
\end{equation}
It remains to show that if $\{p_{i}\}$ is chosen according to (\ref{prior}), then
\begin{equation} \label{deso}
\sum_{i=1}^{K} p_{i} \; I_{i} \, \Exp_{i}[T_{A_{Qc}}]  =   |\log \alpha| + \log \Bigl( \sum_{i=1}^{K}  \delta_{i} \, e^{\varkappa_{i}} \Bigr) +o(1) \quad \text{as}~~ \alpha\to0.
\end{equation}
Substituting the mixing distribution (\ref{prior}) in (\ref{perfect2}), we obtain that, as $\alpha \to 0$,
\begin{equation} \label{des}
I_{i} \, \Exp_{i}[T_{A_{Qc}}]  =   |\log \alpha| + \log \Bigl( \sum_{i=1}^{K}  \delta_{i} \, e^{\varkappa_{i}} \Bigr) +o(1), \quad i=1,\ldots, K,
\end{equation}
which implies (\ref{LB}). Since by construction $\Pro_{0}(T_{A_{Qc}}<\infty)=\alpha$, it also follows from \eqref{des} that
\[
\max_{1 \le i \le K} I_{i} \, \Exp_{i}[T_{A}]  = |\log \alpha| + \log \Bigl( \sum_{i=1}^{K}  \delta_{i} \, e^{\varkappa_{i}} \Bigr) +o(1) \quad \text{as}~~ \alpha\to0
\]
whenever $A=A_\alpha$ is chosen so that $\Pro_0(T_A < \infty)=\alpha$. The proof is complete.
\end{proof}

Therefore, Theorem~\ref{theo2} implies that if the threshold $A=A_\alpha$ is selected so that $\Pro_0(T_A<\infty)=\alpha$ and the mixing distribution $p=p^0$ is given by \eqref{prior}, then the test $T_A(p^0)$ is third-order asymptotically minimax, i.e., as $\alpha\to 0$,
\[
\inf_{T \in \cca} \;  \max_{1 \le i \le K} \;  (I_{i} \,  \Exp_{i}[T])   =  \max_{1 \le i \le K} \;  (I_{i} \,  \Exp_{i}[T_A(p^0)]) + o(1)
\]
and
\[
 \max_{1 \le i \le K} \;  (I_{i} \,  \Exp_{i}[T_A(p^0)]) = |\log \alpha| + \log \Bigl( \sum_{i=1}^{K}  \delta_{i} \, e^{\varkappa_{i}}  \Bigr) +o(1) .
\]

\subsection{Asymptotic Minimax Performance of Mixture Rules} \label{ss:compa}

The minimax performance loss of an arbitrary mixture rule $T_{A}=T_A(p)$ with mixing prior $p=\{p_i\}$ and  error probability $\Pro_{0}(T_{A} < \infty)=\alpha$ can be naturally defined as follows:
\begin{equation} \label{loss0}
\cL_\alpha(T_A(p)) = \max_{1 \leq i \leq  K} \, (I_{i} \; \Exp_{i}[T_{A}]) - \inf_{T \in \cca} \max_{1 \leq i \leq  K} \, (I_{i} \; \Exp_{i}[T]). 
\end{equation}
Corollary \ref{cor1} implies that if $T_{A}(p)$ gives  positive weights to all of its components, i.e., $p_{i}>0$ for every $1 \leq i \leq K$, 
then:
\begin{align*}
I_{i} \; \Exp_{i}[T_{A}] &= |\log \alpha| + \log \Bigl(\sum_{j=1}^{K}  p_{j} \, \delta_{j} \Bigr) + \varkappa_{i} - \log p_{i}+o(1) , \quad 1 \leq i \leq K,
 \end{align*} 
and consequently,
\begin{equation} \label{form}
\max_{1 \leq i \leq  K} \, (I_{i} \; \Exp_{i}[T_{A}]) 
= |\log \alpha| + \log \Bigl[ \Bigl( \sum_{j=1}^{K}  p_{j} \, \delta_{j} \Bigr) \, \Bigl( \max_{1\leq i \leq K} (e^{\varkappa_{i}}/p_{i}) \Bigr) \Bigr] +o(1).
\end{equation}
Therefore, based on  (\ref{LB}) and (\ref{form}), for relatively small $\alpha$ we can approximate the performance loss (\ref{loss0}) of an arbitrary mixture rule $T_{A}(p)$ with mixing distribution $p=\{p_{i}\}$ as follows:
\begin{equation} \label{loss}
\begin{aligned}
\cL(p) &= \log \Bigl[ \Bigl( \sum_{j=1}^{K}  p_{j} \, \delta_{j} \Bigr) \, \Bigl( \max_{1\leq i \leq K} (e^{\varkappa_{i}}/p_{i}) \Bigr) \Bigr] -  
\log \Bigl[ \sum_{j=1}^{K}  e^{\varkappa_{j}} \, \delta_{j}  \Bigr]  \\
&= \log  \frac{ \Bigl( \sum_{j=1}^{K}  p_{j} \, \delta_{j} \Bigr) \, \Bigl( \max_{1\leq i \leq K} (e^{\varkappa_{i}}/p_{i}) \Bigr) }
 { \sum_{j=1}^{K} e^{\varkappa_{j}} \, \delta_{j}},
 \end{aligned}
\end{equation}
where $\cL(p) = \lim_{\alpha\to 0} \cL_\alpha(T_A(p))$ is the limiting (asymptotic) loss. 

Clearly, $\cL(p)>\cL(p^{0})=0$ for any $p=\{p_{i}\}$, where $p^{0}=\{p^{0}_{i}\}$ is the ``optimal'' mixing distribution defined in (\ref{prior}). 
Along with the uniform mixing distribution $p^{u}=\{p^{u}_{i}\}$, $p^{u}_{i}=1/K$ for every $1 \leq i \leq K$, which would be perhaps the first choice  for practical implementation, consider the following  mixing distributions:
\begin{equation} \label{more}
p^{KL}_{i}=\frac{I_{i}}{\sum_{j=1}^{K}I_{j}} \; ,  \quad p^{1/\delta}_{i}=\frac{1/\delta_{i}}{\sum_{j=1}^{K} (1/\delta_{j})} \; , \quad p^{e^{\kappa}/\delta}_{i}=\frac{e^{\kappa_{i}}/\delta_{i}}{\sum_{j=1}^{K} (e^{\kappa_{j}}/\delta_{j})} ,  \quad 1 \leq i  \leq K,
\end{equation}
which resemble  $p^{0}$ in that they all give more weight to those members of $\cP$ that are further from $\Pro_{0}$.
Notice also that in the completely symmetric case that the $\Pro_{i}$-distribution of $\Lambda_{1}^{i}$ does not depend on $i$, these mixing distributions reduce to uniform mixing $p^{u}$.  Using (\ref{loss}), we obtain
\begin{align*}
\cL(p^{KL}) &= \log  \frac{ \Bigl(\sum_{j=1}^{K} \delta_{j} I_{j} \Bigr) \; \Bigr(\max_{1\leq i \leq K} (e^{\varkappa_{i}}/{I_{i}}) \Bigl)} 
                           {\sum_{j=1}^{K} \delta_{j} \,  e^{\varkappa_{j}}} \, , \quad 
\cL(p^{1 / \delta}) = \log \frac{ K \, \Bigl(\max_{1 \leq i \leq K} (\delta_{i} \, e^{\varkappa_{i}}) \Bigr)} 
                                   {\sum_{j=1}^{K} \delta_{j} \,  e^{\varkappa_{j}}}  \\
\cL(p^{u}) &= \log  \frac{ \Bigl(\sum_{j=1}^{K} \delta_{j} \Bigr) \, \Bigl( \max_{1 \leq i \leq K} e^{\varkappa_{i}}\Bigr)}
                          {\sum_{j=1}^{K} \delta_{j} \,  e^{\varkappa_{j}}}    \, , \quad
\cL(p^{e^{\kappa}/\delta}) = \log   \frac{ \Bigl(\sum_{j=1}^{K} e^{\kappa_{j}} \Bigr) \, \Bigl(\max_{1 \leq i \leq K} \delta_{i}\Bigr)}
                                           { \sum_{j=1}^{K} \delta_{j} \,  e^{\varkappa_{j}}}   
 \end{align*}

\subsection{An Inefficient Minimax Mixture Rule}  \label{ss:inef}

We close this section by explaining why we chose to work with a ``modified'' minimax criterion, instead of $\inf_{T \in \cca} \max_{i} \, \Exp_{i}[T]$, 
which at first glance would be a more natural choice. The reason is that if we wanted to design a mixture rule $T_{A}$ that would optimize the latter criterion 
(at least asymptotically), $T_{A}$  should be an equalizer at least up to a first order, i.e. $\Exp_{i}[T_{A}]/\Exp_{j}[T_{A}]$ should be approaching 1 as $A \rightarrow \infty$  for any $1\leq i \neq j\leq K$.  However, assuming that  $I_{i}>I_{ii^*}$  and that Condition \ref{cond} holds for every $i=1, \ldots, K$, Theorem~\ref{theo1}  implies that
\begin{equation} \label{fo}
(I_{i}-I_{ii^*}) \, \Exp_{i}[T_{A}] = |\log \alpha| \, (1+o(1)), \quad i=1,\ldots,K ,
\end{equation}
where $\alpha=\Pro_{0}(T_{A}<\infty)$. Thus, a necessary condition for a mixture rule to attain $\inf_{T \in \cca} \max_{i} \, \Exp_{i}[T]$ asymptotically
is  that 
\begin{equation} \label{for}
I_{i}-I_{ii^*} = I_{j}-I_{jj^*} , \quad 1 \leq   i \neq j\leq K.
\end{equation}
But this condition is not satisfied in general by a non-trivial mixture stopping rule that gives a positive weight to all of its components. Indeed, if $p_{i}>0$ for every $i$, (\ref{for}) holds only in the completely symmetric case that $I_{1}=\ldots=I_{K}$. In general, this condition is satisfied by any mixture rule for which $$p_{i}>0 \Leftrightarrow I_{i} = \min_{j\neq i}[I_{j}-I_{jj^*}].$$ 

However, such a minimax mixture rule can be very inefficient --- it is not even uniformly first-order asymptotically optimal unless we are dealing with the symmetric case. Consider, for example, the slippage problem with $K$ populations and suppose that only one population can be out of control and that $I_1 \ll I_2=\cdots = I_K$. Then, if we wanted to attain $\inf_{T \in \cca} \max_{i} \, \Exp_{i}[T]$, even asymptotically, we should use the one-sided SPRT $T_{A}^{1}$, which is optimal under $\Pro_{1}$, but ignores all other states of the alternative hypothesis. This is clearly not a meaningful answer and shows that the seemingly natural minimax criterion $\inf_{T \in \cca} \max_{i} \, \Exp_{i}[T]$ is not appropriate.

\section{CONTINUOUS MIXTURE RULES FOR AN EXPONENTIAL FAMILY}\label{s:Expfamily}

\subsection{Notation and Assumptions}

In this section we assume that $\cP=\{\Pro_{\theta}\}_{\theta \in \bar{\Theta}}$, where $\bar{\Theta} \subset \Theta$ 
is a finite interval bounded away from 0 and that the $\Pro_{\theta}$-distribution of $X_{1}$, $F_{\theta}$, is defined by (\ref{koop}). 
Recall the definition of the likelihood ratio $\Lambda_{n}^{\theta}$ in (\ref{LR}) and write
$$
S_{n}^{\theta}= \log \Lambda_{n}^{\theta} = \theta \,  \sum_{k=1}^{n}   X_{k}- n \, \psi_{\theta}.
$$
Observe that $\Exp_{\theta}[S_{1}^{\theta}]= \Exp_{\theta}[\theta X_{1} -\psi_{\theta}]= \theta \psi'_{\theta}- \psi_{\theta}=I_{\theta}$, where
$I_{\theta}$ is the Kullback--Leibler divergence of $F_{\theta}$ and $F_{0}$. For every $\theta \in \Theta$, we define the corresponding  one-sided SPRT and overshoot
$$
T_{A}^{\theta} = \inf \Bigl\{ n \geq 1: S^{\theta}_{n}    \geq  \log A \Bigr\} ,  \quad  \eta_{A}^{\theta}=  S^{\theta}_{T^{\theta}_{A}} -\log A \quad \text{on}~~\{T^{\theta}_{A}<\infty\}.
$$
For every $\theta , \tilde{\theta} \in \Theta$ such that $\Exp_{\tilde{\theta}}[\theta  X_{1}- \psi_{\theta}]= \theta \,  \psi'_{\tilde{\theta}} - \psi_{\theta}>0$, we set
\begin{equation}
\delta_{\tilde{\theta}|\theta}= \int_{0}^{\infty} e^{-x} \, \cH_{\tilde{\theta}|\theta}(dx) , \quad \varkappa_{\tilde{\theta}|\theta}= \int_{0}^{\infty} x \,  \cH_{\tilde{\theta}|\theta}(dx) ,
\end{equation}
where $\cH_{\tilde{\theta}|\theta}$ is the asymptotic distribution of $\eta_{A}^{\theta}$ under $\Pro_{\tilde{\theta}}$, i.e.,
$\cH_{\tilde{\theta}|\theta}(x)= \lim_{A \rightarrow \infty} \Pro_{\tilde{\theta}} (\eta_{A}^{\theta} \leq x)$. For brevity's sake, we write 
$\cH_{\theta} = \cH_{\theta|\theta}$, $\varkappa_{\theta}= \varkappa_{\theta|\theta}$, and $\delta_{\theta}= \delta_{\theta|\theta}$.

From (\ref{per1}) it follows that if $\alpha=\Pro_{0}(T_{A}^{\theta} <\infty)$, then the optimal asymptotic performance under $\Pro_{\theta}$ is
\begin{equation} \label{per111}
I_{\theta}  \inf_{T\in\cca} \Exp_{\theta}[T] = I_{\theta} \, \Exp_{\theta}[T_{A}^{\theta}]= |\log \alpha| +  \log (\delta_{\theta} \, e^{\varkappa_{\theta}}) +o(1) \quad \text{as}~~\alpha\to 0.
\end{equation}

Recall that in the continuous parameter case the mixture test $T_A$ is defined by \eqref{rule} with the average likelihood ratio process $\Lambda_n$ given by \eqref{cont}. Below we assume that mixing distribution $G(\theta)$ has continuous density $g(\theta)$ with respect to the  Lebesgue measure, in which case
\begin{equation*} 
\Lambda_{n} = \int_{\Theta}   \exp \{S_n^\theta\}  \, g(\theta) \, d\theta \; ,  \quad n \in \mathbb{N}.
\end{equation*}

\subsection{Asymptotic Performance of Continuous Mixture Rules}\label{ss:TOasycontperf}

The following lemma provides a higher-order asymptotic approximation for the expected sample size $\Exp_\theta [T_A]$ for large threshold values. 

\begin{lemma} \label{leo6}
If $g$ is a positive and continuous mixing density on $\bar{\Theta}$ and $\Pro_0(T_{A}<\infty)=\alpha$, then  for every $\theta \in \bar{\Theta}$
\begin{equation} \label{p}
\begin{aligned}
I_{\theta} \; \Exp_{\theta}[T_{A}] & = |\log \alpha| +   \log \sqrt{|\log \alpha|} - \frac{1+ \log (2 \pi)}{2}
\\
& \quad + \log \Bigl( \frac{e^{\varkappa_{\theta}}  \, \sqrt{\psi''_{\theta}/ I_{\theta}}}{g(\theta)} \, \int_{\bar{\Theta}} \delta_{\theta} \, g(\theta) \, d \theta \Bigr)  +o(1) \quad \text{as}~~ \alpha \to 0.
\end{aligned}
\end{equation}
\end{lemma}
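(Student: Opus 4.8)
The plan is to mirror the renewal-theoretic treatment of the discrete case (Lemmas~\ref{leo3}--\ref{leo4} and Theorem~\ref{theo1}), with the crucial difference that the perturbation of the underlying random walk now carries a genuine $-\tfrac12\log n$ drift, which is the source of the extra $\log\sqrt{|\log\alpha|}$ term. Fix $\theta$ in the interior of $\bar\Theta$ and decompose $Z_n = S_n^\theta + Y_n^\theta$, where
\[
Y_n^\theta = \log\Bigl(\int_{\bar\Theta} e^{S_n^{\theta'}-S_n^\theta}\,g(\theta')\,d\theta'\Bigr).
\]
Since $\{S_n^\theta\}$ is a $\Pro_\theta$-random walk with positive mean $I_\theta$, Wald's identity gives $I_\theta\,\Exp_\theta[T_A]=\Exp_\theta[S_{T_A}^\theta]$, and writing the overshoot of $Z_n$ as $R_A=Z_{T_A}-\log A$ we obtain the master identity $I_\theta\,\Exp_\theta[T_A]=\log A+\Exp_\theta[R_A]-\Exp_\theta[Y_{T_A}^\theta]$. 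The whole proof then reduces to evaluating the two expectations on the right and relating $\log A$ to $|\log\alpha|$.

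The heart of the argument is a Laplace approximation of the random integral defining $Y_n^\theta$. Writing $\bar X_n=n^{-1}\sum_{k\le n}X_k$ and $\hat\theta_n$ for the maximum-likelihood estimator solving $\psi'_{\hat\theta_n}=\bar X_n$, a second-order expansion of $S_n^{\theta'}-S_n^\theta$ about $\hat\theta_n$ yields
\[
Y_n^\theta = \bigl(S_n^{\hat\theta_n}-S_n^\theta\bigr) + \log g(\hat\theta_n) + \tfrac12\log\frac{2\pi}{n\,\psi''_{\hat\theta_n}} + o_p(1),
\]
so that $\zeta_n := Y_n^\theta+\tfrac12\log n$ equals $(S_n^{\hat\theta_n}-S_n^\theta)+\log g(\hat\theta_n)+\tfrac12\log(2\pi/\psi''_{\hat\theta_n})+o_p(1)$. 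Under $\Pro_\theta$ one has $\hat\theta_n\to\theta$ a.s.\ and $n\,\psi''_\theta(\hat\theta_n-\theta)^2$ converges in distribution to a $\chi^2$ variable with one degree of freedom; since $S_n^{\hat\theta_n}-S_n^\theta=\tfrac12 n\,\psi''_\theta(\hat\theta_n-\theta)^2+o_p(1)$, the sequence $\zeta_n$ is slowly changing and converges in distribution, its limit having mean $\tfrac12+\log g(\theta)+\tfrac12\log(2\pi/\psi''_\theta)$ (using that a $\chi^2$ variable with one degree of freedom has mean $1$). I would also establish exponential-type tail bounds for $\sup_n(\zeta_n-\Exp_\theta\zeta_n)$, in the spirit of Lemma~\ref{leo1}, to secure the uniform integrability needed below.

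With these ingredients the three remaining pieces are handled as in the discrete case. First, because $\{Y_n^\theta\}$ changes by $O(1/n)$ per step near the boundary, the overshoot $R_A$ is asymptotically that of the pure random walk $\{S_n^\theta\}$, so $\Exp_\theta[R_A]\to\varkappa_\theta$; and by the same change of measure as in Lemma~\ref{leo3}, now with $\Pro=\int_{\bar\Theta}\Pro_\theta\,g(\theta)\,d\theta$, one finds $A\,\Pro_0(T_A<\infty)\to\int_{\bar\Theta}\delta_\theta\,g(\theta)\,d\theta$, i.e.\ $\log A=|\log\alpha|+\log\int_{\bar\Theta}\delta_\theta\,g(\theta)\,d\theta+o(1)$. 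Second, from $T_A I_\theta/\log A\to 1$ in probability, together with uniform integrability of $\log(T_A I_\theta/\log A)$ obtained from a lower-deviation bound on $T_A$ of type (A4) in Lemma~\ref{leo4}, one gets $\Exp_\theta[\tfrac12\log T_A]=\tfrac12\log\log A-\tfrac12\log I_\theta+o(1)$. Third, since $T_A\to\infty$ and $\zeta_n$ converges in distribution and is uniformly integrable along $\{T_A\}$, $\Exp_\theta[\zeta_{T_A}]\to\tfrac12+\log g(\theta)+\tfrac12\log(2\pi/\psi''_\theta)$. Substituting $\Exp_\theta[Y_{T_A}^\theta]=-\tfrac12\log\log A+\Exp_\theta[\zeta_{T_A}]+o(1)$ into the master identity, inserting the relation between $\log A$ and $|\log\alpha|$ (and $\log\log A=\log|\log\alpha|+o(1)$), and collecting the constant $-\tfrac12-\tfrac12\log(2\pi)$ together with the logarithmic terms $\varkappa_\theta-\tfrac12\log I_\theta-\log g(\theta)+\tfrac12\log\psi''_\theta+\log\int_{\bar\Theta}\delta_\theta g$ produces exactly \eqref{p}.

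The main obstacle I anticipate is making the Laplace approximation of $Y_n^\theta$ rigorous uniformly in $n$ and, above all, upgrading the distributional limits of $R_A$, $\log T_A$ and $\zeta_{T_A}$ to convergence of expectations. The delicate point is uniform integrability of $\zeta_{T_A}$: it rests on a tail bound for the generalized-likelihood-ratio statistic $S_n^{\hat\theta_n}-S_n^\theta$ that is uniform over the relevant range of $n$, and on controlling the contribution of the integral near the endpoints of $\bar\Theta$, where the Gaussian approximation degrades. Verifying the nonlinear-renewal hypotheses for the log-perturbed walk --- in particular the analog of condition (A4) guaranteeing that $T_A$ is not atypically small --- is the other place where real work is required; everything else is bookkeeping once these estimates are in hand.
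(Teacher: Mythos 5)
Your argument is correct in substance, but note that it is doing far more work than the paper itself does: the paper's proof of Lemma \ref{leo6} is essentially two citations plus bookkeeping. It quotes the expansion \eqref{polla} for $I_\theta\,\Exp_\theta[T_A]$ in terms of $\log A$ directly from \citet{PollakSiegmund-AS75} and \citet[p.~68]{Woodroofe-book82}, quotes the error-probability limit $A\,\Pro_0(T_A<\infty)\to\int_{\bar\Theta}\delta_\theta\,g(\theta)\,d\theta$ from Corollary 1 of \citet[p.~67]{Woodroofe-book82} (see also \citet{Pollak-AS86}), and then substitutes $\log A=|\log\alpha|+\log\bigl(\int_{\bar\Theta}\delta_\theta\,g(\theta)\,d\theta\bigr)+o(1)$ into \eqref{polla} --- exactly your final step. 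What you reconstruct from first principles is the content of those citations: the Laplace expansion of $Y_n^\theta$ about the MLE producing the $-\tfrac12\log n$ drift and the $\tfrac12\chi^2_1$ limit of $S_n^{\hat\theta_n}-S_n^\theta$ with mean $\tfrac12$, the nonlinear-renewal treatment of the overshoot, and the mixture change of measure for the type-I error; your bookkeeping (collecting $\tfrac12\Exp_\theta[\log T_A]=\tfrac12\log\log A-\tfrac12\log I_\theta+o(1)$ with the limit mean of $\zeta_{T_A}$) recombines correctly to \eqref{p}. The paper's route buys brevity and rigor by reference --- the uniform-integrability, (A4)-type lower-deviation, and boundary-degradation issues you flag as the "real work" are precisely the technical heart of \citet{PollakSiegmund-AS75} and of Woodroofe's Chapter 4, which is why the authors do not redo them --- while your route buys a self-contained derivation that makes transparent where the $\log\sqrt{|\log\alpha|}$ term and the constant $-(1+\log 2\pi)/2$ come from. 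One small caution: your Laplace step, like the cited results, is valid for $\theta$ interior to the support of $g$ (at an endpoint the Gaussian integral contributes only half its mass, shifting the constant by $-\log 2$), so your restriction to interior $\theta$ is the right reading of the hypothesis that $g$ be positive and continuous on $\bar{\Theta}$.
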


\begin{proof}
From \citet{PollakSiegmund-AS75} and \citet{Woodroofe-book82}, p.\ 68, it follows that for every $\theta \in \Theta\backslash \{0\}$
\begin{equation} \label{polla}
I_{\theta} \; \Exp_{\theta}[T_{A}]= \log A +   \log \sqrt{\log A} - \frac{1+ \log (2 \pi)}{2}
+ \log \Bigl( e^{\varkappa_{\theta}} \frac{\sqrt{\psi''_{\theta}/I_{\theta}}}{g(\theta)} \Bigr) +o(1) \quad \text{as}~~A\to\infty.
\end{equation}

Moreover, from Corollary 1  in \citet{Woodroofe-book82}, p.\ 67  (see also \citet{Pollak-AS86}) it follows  that
$$  
A \, \Pro_0(T_{A}<\infty) \rightarrow \int_{\bar{\Theta}} \delta_{\theta} \, g(\theta) \, d \theta ,
$$
and consequently,  
\begin{equation} \label{wood}
\log A = |\log \alpha| +  \log  \Bigl( \int_{\bar{\Theta}} \delta_{\theta} \, g(\theta) \, d \theta \Bigr) +o(1).
\end{equation}
We can now complete the proof by substituting (\ref{wood}) into (\ref{polla}).
\end{proof}

Asymptotic approximations \eqref{per111} and \eqref{p} imply that any continuous mixture rule with positive and continuous density on $\bar{\Theta}$ minimizes the expected sample size to first-order for every $\theta \in \bar{\Theta}$, i.e.,
\[
\Exp_{\theta}[T_{A}]= \inf_{T\in  \cca} \Exp_\theta [T] \, (1+o(1)) \quad \text{as}~~\alpha \to 0~~\text{for all}~~\theta \in \bar{\Theta}.
\] 
However, such a continuous mixture rule is  not second-order asymptotically optimal  for any
$\theta \in \bar{\Theta}$. More specifically, the following asymptotic equality holds 
$$  
\Exp_{\theta}[T_{A}]- \inf_{T\in  \cca} \Exp_\theta [T] = O\brc{\log (\sqrt{|\log \alpha|})} \quad \text{for all}~~\theta \in \bar{\Theta}.
$$
In other words, the distance between $\Exp_{\theta}[T_{A}]$ and the optimal asymptotic performance  (\ref{per111}) under $\Pro_{\theta}$
does not remain bounded as $\alpha\to 0$ for  any $\theta \in \bar{\Theta}$.

\subsection{A Nearly Minimax Continuous Mixture Rule}\label{ss:TOminimaxexp}

In the following theorem we show that a particular \textit{continuous} mixture rule is third-order asymptotically minimax in the sense of minimizing the maximal Kullback--Leibler information $\sup_\theta I_\theta \Exp_\theta[T]$ in the class $\cca$ as $\alpha\to0$. 

\begin{theorem} \label{theo3}
If the limiting average overshoot $\varkappa_{\theta}$ is a continuous function on $\bar{\Theta}$, then  
\begin{equation}  \label{pol}
\begin{aligned}
\inf_{T \in \cca} \;  \sup_{\theta \in \bar{\Theta}} \quad I_{\theta} \; \Exp_{\theta}[T]
&\geq  |\log \alpha| +  \log \sqrt{|\log \alpha|} -  \frac{1+ \log (2 \pi)}{2} 
\\
& \quad + \log \Bigl( \int_{\bar{\Theta}} \delta_{\theta} \,  e^{\varkappa_{\theta}} \, \sqrt{\psi''_{\theta} / I_{\theta}} \; d \theta \Bigr) + o(1) \quad \text{as}~~ \alpha\to 0,
\end{aligned}
\end{equation}
and this asymptotic lower bound is attained by the continuous mixture rule $T_A(g^0)$ whose mixing density is
\begin{equation} \label{prior2}
g^0(\theta)  =  \frac{e^{\varkappa_{\theta}}  \sqrt{\psi''_{\theta}/ I_{\theta}}}
{\int_{\bar{\Theta}}   e^{\varkappa_{\theta}} \; \sqrt{\psi''_{\theta} / I_{\theta}} \; d \theta} \, , \quad \theta  \in \bar{\Theta}
\end{equation}
and for which $\Pro_{0}(T_{A}(g^0)<\infty)=\alpha$.
\end{theorem}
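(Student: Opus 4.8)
The plan is to mirror the proof of Theorem~\ref{theo2}, replacing the discrete Bayesian problem with its continuous-parameter analog and exploiting the fact that the density $g^0$ in \eqref{prior2} turns the mixture rule into an asymptotic equalizer. I would first dispose of the attainment (upper) direction, which is a direct computation using Lemma~\ref{leo6}, and then establish the lower bound \eqref{pol} through a minimax-versus-Bayes inequality. Note that the continuity of $\varkappa_{\theta}$ assumed in the statement, together with the smoothness and positivity of $\psi''_{\theta}$ and $I_{\theta}$ on the compact interval $\bar{\Theta}$ bounded away from $0$, guarantees that $g^0$ is a genuine positive continuous density, so that Lemma~\ref{leo6} is applicable to $T_A(g^0)$.

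For the attainment direction I would substitute $g=g^0$ into the expansion \eqref{p}. The density \eqref{prior2} is constructed precisely so that
\[
\frac{e^{\varkappa_{\theta}}\sqrt{\psi''_{\theta}/I_{\theta}}}{g^0(\theta)} = \int_{\bar{\Theta}} e^{\varkappa_{s}}\sqrt{\psi''_{s}/I_{s}}\,ds
\]
is independent of $\theta$, while $\int_{\bar{\Theta}}\delta_{\theta}\,g^0(\theta)\,d\theta = \bigl(\int_{\bar{\Theta}}\delta_{\theta}\,e^{\varkappa_{\theta}}\sqrt{\psi''_{\theta}/I_{\theta}}\,d\theta\bigr)\big/\bigl(\int_{\bar{\Theta}} e^{\varkappa_{s}}\sqrt{\psi''_{s}/I_{s}}\,ds\bigr)$. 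Multiplying these two factors, the normalizing integrals cancel and the argument of the logarithm in \eqref{p} collapses to $\int_{\bar{\Theta}}\delta_{\theta}\,e^{\varkappa_{\theta}}\sqrt{\psi''_{\theta}/I_{\theta}}\,d\theta$, which is exactly the constant appearing in \eqref{pol}. Since the resulting expression no longer depends on $\theta$, the rule $T_A(g^0)$ is an asymptotic equalizer, and therefore $\sup_{\theta\in\bar{\Theta}} I_{\theta}\Exp_{\theta}[T_A(g^0)]$ equals the right-hand side of \eqref{pol} up to $o(1)$.

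For the lower bound I would introduce the continuous analog $\cB(\pi,g^0,c)$ of the Bayesian problem \eqref{bayes}, with prior probability $\pi$ on $\Hyp_0$ and per-observation cost $c\,I_{\theta}$ under $\Pro_{\theta}$, so that the integrated risk of a stopping time $T$ is $\cR_{c}(T)=\pi\,\Pro_{0}(T<\infty)+c(1-\pi)\int_{\bar{\Theta}} I_{\theta}\Exp_{\theta}[T]\,g^0(\theta)\,d\theta$. Taking $\pi=1/2$ and $Q>1/e$, and choosing $c$ so that $\Pro_{0}(T_{A_{Qc}}(g^0)<\infty)=\alpha$ (recall $A_{Qc}$ from \eqref{qc}), I would argue as in \eqref{start}: for any $T\in\cca$ the weighted average is dominated by the supremum, $\int_{\bar{\Theta}} I_{\theta}\Exp_{\theta}[T]\,g^0\,d\theta\le\sup_{\theta} I_{\theta}\Exp_{\theta}[T]$, which gives $\inf_{T\in\cca}\cR_{c}(T)\le \tfrac{\alpha}{2}+\tfrac{c}{2}\inf_{T\in\cca}\sup_{\theta} I_{\theta}\Exp_{\theta}[T]$. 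The crucial input is a continuous-parameter version of Lemma~\ref{leo5}, asserting that for all sufficiently small $c$ the mixture rule $T_{A_{Qc}}(g^0)$ is Bayes-optimal for $\cB(\pi,g^0,c)$ among all stopping times; since this rule lies in $\cca$ by the calibration of $c$, one obtains $\inf_{T\in\cca}\cR_{c}(T)=\cR_{c}(T_{A_{Qc}}(g^0))=\tfrac{\alpha}{2}+\tfrac{c}{2}\int_{\bar{\Theta}} I_{\theta}\Exp_{\theta}[T_{A_{Qc}}(g^0)]\,g^0\,d\theta$. Combining the two displays and cancelling the common terms yields $\inf_{T\in\cca}\sup_{\theta} I_{\theta}\Exp_{\theta}[T]\ge \int_{\bar{\Theta}} I_{\theta}\Exp_{\theta}[T_{A_{Qc}}(g^0)]\,g^0(\theta)\,d\theta$, and the equalizer computation of the previous paragraph evaluates this right-hand side to precisely the bound in \eqref{pol}.

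The main obstacle is the continuous analog of Lemma~\ref{leo5}: establishing that the threshold rule on the continuously mixed likelihood ratio $\Lambda_{n}=\int_{\bar{\Theta}}\Lambda_{n}^{\theta}\,g^0(\theta)\,d\theta$ solves the optimal stopping problem for $\cB(\pi,g^0,c)$. The difficulty is that the per-observation cost $c\,I_{\theta}$ is non-homogeneous in $\theta$, so the instantaneous expected cost given $\Fc_{n}$ is the posterior-weighted quantity $c\int_{\bar{\Theta}} I_{\theta}\,\Pi_{n}(d\theta)$ rather than a constant; one must verify that the Bayes continuation region is nevertheless one-sided in the posterior probability of $\Hyp_0$ (equivalently, a threshold on $\Lambda_{n}$) for every small $c$. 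I expect this to go through by the backward-induction and monotonicity argument of \citet{Lorden-AS77} and \citet{Pollak-AS78}, carried out in the appendix exactly as for the discrete Lemma~\ref{leo5}, since the continuous mixing distribution enters only through the one-dimensional statistic $\Lambda_{n}$ and does not otherwise alter the Markovian structure of the posterior process.
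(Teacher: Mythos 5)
Your proposal is correct and takes essentially the same route as the paper: the paper's own proof consists precisely of substituting \eqref{prior2} into \eqref{p} for the attainment part (your equalizer computation matches \eqref{mini} exactly) and of asserting that the lower bound ``follows the same steps as Theorem~\ref{theo2}'' --- i.e., the continuous Bayesian problem and the continuous analog of Lemma~\ref{leo5} that you spell out. One small correction: the appendix proof of Lemma~\ref{leo5} is not a backward-induction/monotonicity argument, but a direct one --- optional stopping of the bounded martingale $\Pi_n$ gives $\cR_c(T) \geq \pi$ for \emph{every} stopping time, while $\cR_c(T_{A_{Qc}}) \leq \pi$ for all small $c$ --- so your worry about verifying one-sidedness of the Bayes continuation region under the non-homogeneous cost $c\,I_\theta$ is moot; the argument transfers to the continuous case once the bound $I_i\,\Exp_i[T_A] \leq \log A + C$ is replaced by $I_\theta\,\Exp_\theta[T_A] \leq \log A + \log\sqrt{\log A} + C$ uniformly on $\bar{\Theta}$ (from \eqref{polla}), which is still $o(\pi/c)$ and hence harmless.
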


\begin{proof}
Lower bound (\ref{pol}) can be established following the same steps as in the proof of Theorem \ref{theo2}. The details are omitted. 

In order to show that the mixture rule $T_A(g^0)$ with mixing density (\ref{prior2}) attains the asymptotic lower bound in (\ref{pol}), 
it suffices to substitute (\ref{prior2}) into (\ref{p}) to obtain that for every $\theta \in \bar{\Theta}$
\begin{equation} \label{mini}
\begin{aligned}
I_{\theta}  \, \Exp_{\theta}[T_{A}] &= |\log \alpha| +  \log (\sqrt{|\log \alpha|}) -  \frac{1+ \log (2 \pi)}{2}  
\\
& \quad + \log \Bigl( \int_{\bar{\Theta}} \delta_{\theta} \,  e^{\varkappa_{\theta}} \, \sqrt{\psi''_{\theta} / I_{\theta}} \; d \theta \Bigr) + o(1) \quad \text{as}~~ \alpha\to 0 .
\end{aligned}
\end{equation}
This completes the proof.
\end{proof}

\begin{remark} 
Note that for (\ref{polla}) and (\ref{mini}) to hold, the mixing density  (\ref{prior2}) must be continuous, which requires that $\varkappa_{\theta}$ must be a continuous function, since $\psi_{\theta}$ and  $I_{\theta}= \theta \psi'_{\theta}- \psi_{\theta}$ are continuous. This is true at least when the distribution of $S_1^\theta$ is continuous.
\end{remark}

Typically, the computation of the optimal mixing density (\ref{prior2}) requires discretization. An example where such a discretization is not necessary is that of an exponential distribution. More specifically, suppose that $dF_{0}(x)=e^{-x} dx$ and $dF_{\theta}(x)= e^{-(1-\theta)x} dx$ for every $0 <\theta <1 $. Then $\psi_{\theta}= - \log (1-\theta)$, $I_{\theta}= \theta/ (1-\theta) + \log (1-\theta)$ 
and the exact distribution of the overshoot $\eta_{A}^{\theta}$ is exponential with rate $(1-\theta)/\theta$ for every $A>1$. 
Therefore, $\cH_{\theta}$ is an exponential distribution with  rate $(1-\theta)/\theta$, which implies that $\varkappa_{\theta}= \theta/ (1-\theta)$ and $\delta_{\theta}= \theta$. As a result, mixing density (\ref{prior2}) is completely specified up to the normalizing constant 
\begin{equation} \label{pro}
\int_{\bar{\Theta}}   e^{\varkappa_{\theta}} \; \sqrt{\psi''_{\theta} / I_{\theta}} \; d \theta= \int_{\bar{\Theta}}  \frac{\exp\{\theta /(1-\theta)\}}{ \sqrt{(1-\theta) [\theta+ (1-\theta) \log (1-\theta)]}} d\theta ,
\end{equation}
which can be computed numerically. 

Unfortunately, $\varkappa_{\theta}$ and $\delta_{\theta}$ do not have analogous closed-form expressions in terms of $\theta$
in general. Therefore, it is typically difficult to compute  optimal mixing density $g^{0}$.  Thus, in practice it may be more convenient to choose 
mixing density $g$ from the class of probability density functions on the whole parameter space $\Theta$ that are \textit{conjugate} to $f_{\theta}$, so that the resulting mixture rule is easily computable. However, such a mixture rule will only be second-order asymptotically minimax over $\bar{\Theta}$, as it was shown by \citet{Pollak-AS78}.

In the following subsection, we consider another alternative to the nearly minimax continuous mixture rule; we approximate $\bar{\Theta}$
with a discrete set of points and we use the corresponding nearly minimax discrete mixture test.

\subsection{A Discrete Approximation}

A practical alternative to the optimal continuous mixture rule is to approximate the interval $\bar{\Theta}$  by a  genuinely discrete set, 
$\Theta_{K}= \{\theta_{1}, \ldots, \theta_{K} \}\subset \bar{\Theta}.$ In this case, the discrete mixture likelihood ratio statistic takes the form
$$
 \Lambda_{n}=\sum_{i=1}^{K} p_{i}\, e^{S_{n}^{\theta_{i}}}=  \sum_{i=1}^{K} p_{i} \exp\set{\sum_{m=1}^{n} \Bigl[ \theta_{i} \, X_{m}- \psi_{\theta_{i}} \Bigr]}, \quad n \in \mathbb{N}, 
$$ 
and, according to Theorem~\ref{theo2}, the optimal mixing distribution $\{p_{i}\}$ is given by (\ref{prior}). 
By Corollary~\ref{cor1}, such a discrete mixture rule is \textit{second-order} asymptotically optimal under $\Pro_{\theta_{i}}$ for every $i=1,\ldots,K$, that is, $\Exp_{\theta_{i}}[T_{A}] = \inf_{T \in \cca} \Exp_{\theta_{i}}[T] + O(1)$ for every $i=1,\ldots,K$. 
Moreover, it is asymptotically {\em third-order} minimax with respect to the Kullback--Leibler information, i.e., 
$$\max_{1 \leq i \leq K} (I_{\theta_{i}}\Exp_{\theta_{i}}[T_{A}]) = \inf_{T \in \cca} \max_{1 \leq i \leq K} \,  (I_{\theta_{i}} \, \Exp_{\theta_{i}}[T]) + o(1).$$
However, it is  not even first-order asymptotically optimal under $\Pro_{\theta}$ when $\theta \notin \Theta_{K}$. 
More specifically, we have the following corollary of Theorem \ref{theo1}, for which we write $I_{\theta \theta^{*}}$ for the Kullback--Leibler divergence of the distributions $F_{\theta}$ and $F_{\theta^{*}}$, that is,
\begin{equation} \label{dl}
I_{\theta \theta^{*}}= \Exp_{\theta}[ S_{1}^{\theta}- S_{1}^{\theta^{*}}] =
\Exp_{\theta}[ (\theta-\theta^{*}) \, X_{1}- (\psi_{\theta}- \psi_{\theta^{*}})]
=  (\theta-\theta^{*}) \, \psi'_{\theta}- (\psi_{\theta}- \psi_{\theta^{*}}).
\end{equation}

\begin{corollary} \label{cor2}
Suppose that $\theta \in \Theta \backslash \Theta_{K}$ and that there exists a unique $\theta^{*}= \arg \min_{\theta_{j} \in \Theta_{K}} I_{\theta \theta_{j}}$.
If $\psi_{\theta^{*}} < \theta^{*} \psi'_{\theta}$, then $\Pro_{\theta}(T_{A}<\infty)=1$. If also $\Pro_{0}(T_{A}<\infty)=\alpha$, then  
\begin{equation} \label{perfecto}
[I_{\theta}-I_{\theta \theta^{*}}] \; \Exp_{\theta}[T_{A}] =   | \log \alpha| + \log \Bigl(\sum_{i=1}^{K}  p_{i} \, \delta_{\theta_{i}} \Bigr) + \varkappa_{\theta|\theta^{*}} - \log p_{\theta^{*}}+ o(1)  \quad \text{as} \; \alpha \rightarrow 0.
\end{equation}
\end{corollary}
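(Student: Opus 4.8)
The plan is to recognize Corollary~\ref{cor2} as the exponential-family instance of Theorem~\ref{theo1} in the ``misspecified'' case $p_{\theta}=0$ (i.e.\ $\theta\notin\Theta_{K}$), and to verify that the hypotheses of that theorem translate into the conditions stated here. Under the correspondence $i\leftrightarrow\theta$, $i^{*}\leftrightarrow\theta^{*}$, $Z_{n}^{i}\leftrightarrow S_{n}^{\theta_{i}}$, $I_{ii^{*}}\leftrightarrow I_{\theta\theta^{*}}$, $\varkappa_{i|i^{*}}\leftrightarrow\varkappa_{\theta|\theta^{*}}$ and $\delta_{i}\leftrightarrow\delta_{\theta_{i}}$, the claimed expansion \eqref{perfecto} is exactly \eqref{perfect} after substituting $\Pro_{0}(T_{A}<\infty)=\alpha$, so that $|\log\Pro_{0}(T_{A}<\infty)|=|\log\alpha|$ and $\sum_{i:p_{i}>0}p_{i}\delta_{\theta_{i}}=\sum_{i=1}^{K}p_{i}\delta_{\theta_{i}}$. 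Thus the task reduces to checking, within the family \eqref{koop}, the drift condition, the power-one property, the Cram\'er-type Condition~\ref{cond}, and the moment and non-arithmeticity hypotheses.

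First I would establish the drift identity. Since $I_{\theta}=\theta\psi'_{\theta}-\psi_{\theta}$ and, by \eqref{dl}, $I_{\theta\theta^{*}}=(\theta-\theta^{*})\psi'_{\theta}-(\psi_{\theta}-\psi_{\theta^{*}})$, a direct subtraction gives
\[
I_{\theta}-I_{\theta\theta^{*}}=\theta^{*}\psi'_{\theta}-\psi_{\theta^{*}}=\Exp_{\theta}[S_{1}^{\theta^{*}}].
\]
Hence the hypothesis $\psi_{\theta^{*}}<\theta^{*}\psi'_{\theta}$ is precisely the statement $I_{\theta}>I_{\theta\theta^{*}}$, i.e.\ that the $\Pro_{\theta}$-random walk $\{S_{n}^{\theta^{*}}\}$ has positive drift. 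With $p_{\theta}=0$ and this positivity, Lemma~\ref{leo2} immediately yields $\Pro_{\theta}(T_{A}<\infty)=1$, giving the power-one claim.

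Next I would verify Condition~\ref{cond} for each active $\theta_{j}\neq\theta^{*}$. The relevant increment is $S_{1}^{\theta_{j}}-S_{1}^{\theta^{*}}=(\theta_{j}-\theta^{*})X_{1}-(\psi_{\theta_{j}}-\psi_{\theta^{*}})$, and a change-of-variables computation in the exponential family shows that its log-moment generating function (tilting the base measure) is
\[
m(t)=\psi_{\theta+t(\theta_{j}-\theta^{*})}-\psi_{\theta}-t\,(\psi_{\theta_{j}}-\psi_{\theta^{*}}),
\]
defined on the open set of $t$ for which $\theta+t(\theta_{j}-\theta^{*})\in\Theta$. Because $\psi$ is convex and finite on the interior of the natural parameter space, $m$ is convex with $m(0)=0$ and $m'(0)=I_{\theta\theta^{*}}-I_{\theta\theta_{j}}<0$, the strict inequality being the defining property of the unique minimizer $\theta^{*}$. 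Convexity, together with the growth of $\psi$ toward the boundary of $\Theta$, forces $m$ to return to the level $0$ at a unique $\gamma_{j}>0$ with $m'(\gamma_{j})<\infty$, which is exactly Condition~\ref{cond}. The finite-second-moment and non-arithmeticity requirements on $S_{1}^{\theta^{*}}=\theta^{*}X_{1}-\psi_{\theta^{*}}$ under $\Pro_{\theta}$ are inherited from the standing regularity of the section: in the interior of the natural parameter space all polynomial moments are finite, and non-arithmeticity of $S_{1}^{\theta^{*}}$ follows from that of $X_{1}$ since $\theta^{*}\neq0$ on $\bar{\Theta}$.

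With all hypotheses verified, I would invoke Theorem~\ref{theo1} and substitute $\Pro_{0}(T_{A}<\infty)=\alpha$ to obtain \eqref{perfecto}. The main obstacle is the verification of Condition~\ref{cond}: one must argue carefully that the convex function $m$ genuinely crosses zero on the positive axis, which hinges on how far the tilted parameter $\theta+t(\theta_{j}-\theta^{*})$ can be pushed into $\Theta$ and on the boundary behavior of $\psi_{\theta}$ there. Everything else is a routine notational translation of Theorem~\ref{theo1} combined with the power-one conclusion of Lemma~\ref{leo2}.
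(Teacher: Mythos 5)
Your proposal is correct and follows essentially the same route as the paper: the drift identity $I_{\theta}-I_{\theta\theta^{*}}=\theta^{*}\psi'_{\theta}-\psi_{\theta^{*}}=\Exp_{\theta}[S_{1}^{\theta^{*}}]$ combined with Lemma~\ref{leo2} for the power-one claim, followed by an appeal to Theorem~\ref{theo1} with $\Pro_{0}(T_{A}<\infty)=\alpha$ substituted via \eqref{a}. The only divergence is in how the regularity hypotheses are discharged: where you verify Condition~\ref{cond} by an explicit cumulant computation (and rightly flag the zero-crossing of $m$ near the boundary of $\Theta$ as the one delicate point, which your argument leaves open for non-steep or boundary cases), the paper simply cites Lemma 6.4 of \citet{Woodroofe-book82} for the non-arithmeticity and exponential moments of $\theta^{*}X_{1}-\psi_{\theta^{*}}$ under $\Pro_{\theta}$, with an ``almost every $\theta$'' qualifier playing the role of your caveat.
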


\begin{proof}
From Lemma \ref{leo2} it follows that $\Pro_{\theta}(T_{A}<\infty)=1$ as long as $I_{\theta}> I_{\theta \theta^{*}}$, or equivalently,
$$ 
\theta \psi'_{\theta}- \psi_{\theta}  >  (\theta-\theta^{*}) \, \psi'_{\theta}- (\psi_{\theta}- \psi_{\theta^{*}})
\Leftrightarrow \; \psi_{\theta^{*}} < \theta^{*} \psi'_{\theta}.
$$
Moreover, since the random variable $\theta^{*} X_{1}- \psi_{\theta^{*}}$ has non-arithmetic distribution with exponential moments under $\Pro_{\theta}$ for almost every $\theta$  (see Lemma 6.4 in \citet{Woodroofe-book82}), the conditions of Theorem \ref{theo1} are satisfied, and consequently, we obtain (\ref{perfecto}).
 \end{proof}

\section{MONTE-CARLO SIMULATIONS  }\label{s:MC} 

\renewcommand{\captionlabelfont}{\bfseries} 

In this section, we illustrate the asymptotic formulas obtained in Section~\ref{s:DMR} and check their validity  
with simulation experiments in the Gaussian example where $F_{0}(x)=\Phi(x)$ and  $F_{i}(x)= \Phi(x-i)$ for $i=1,2,3$ ($\Phi(x) = (2\pi)^{-1/2}\int_{-\infty}^x e^{-t^2/2} dt$ is the standard normal distribution function). Thus, the observations are normally distributed with unit variance and  mean that is equal to 0 under $\Hyp_{0}$ and is either 1 or 2 or 3 under $\Hyp_{1}$ ($K=3$).  In this example, the quantities $\varkappa_{i}$ and $\delta_{i}$ can be computed with any precision using  the following expressions:
\begin{align}
\varkappa_{i} &= 1 + \frac{i^{2}}{4}- i \, \sum_{n=1}^{\infty} \Bigl[ \frac{1}{\sqrt{n}} \, \phi \Bigl( \frac{i}{2} \, \sqrt{n} \Bigr) 
                  - \frac{i}{2} \, \Phi \Bigl( -\frac{i}{2} \, \sqrt{n} \Bigr)  \Bigr],  \label{kap1} 
                  \\
\delta_{i}    &= \frac{1}{I_{i}} \exp\Bigl\{ -2 \, \sum_{n=1}^{\infty} \frac{1}{n} \, \Phi \Bigl( -\frac{i}{2} \, \sqrt{n} \Bigr) \Bigr\} .  \label{kap2}
\end{align}
(see, e.g., \citet{Woodroofe-book82}, p.\ 32). 

In Table \ref{tab1}, we compute these quantities, the optimal mixing distribution  (\ref{prior}),  as well as the mixing distributions that we defined in (\ref{more}). Using Table \ref{tab1}, we can compute the asymptotic performance loss (\ref{loss}) for each of the corresponding mixture rules:
$$
\cL(p^{KL})=0.21 \, , \quad  \cL(p^{1/\delta})=0.58 \, , \quad \cL(p^{e^{\kappa}/\delta})=0.85 \, ,  \quad \cL(p^{u})=1.21.
$$

\begin{table}[!h] 
\renewcommand{\arraystretch}{1.2}
\centering
	\caption{Mixing distributions and quantities $\varkappa_i$ and $\delta_i$} \label{tab1}	
	\vspace{3mm}
		\begin{tabular}{c||c|c|c||c|c|c|c|c} 
		$i$ & $I_{i}$  & $\varkappa_{i}$ & $\delta_{i}$ & $p^{e^{\kappa} / \delta}$ & $p^{0}$ & $p^{KL}$ & $p^{1 / \delta}$ & $p^{u}$ \\
				\hline \hline
		$1$ & $0.5$   & $0.718$ & $0.560$  & 0.25 & 0.066 & 0.071  &  0.176  & 0.33      \\
	  $2$ & $2$   &   $1.747$ & $0.320$ &  0.125 & 0.185 &  0.286 &  0.307  & 0.33   \\
	  $3$ & $4.5$ & $3.146$ &  $0.190$  & 0.85 &  0.749 &  0.643 &  0.517  & 0.33    \\
		\end{tabular}
	\end{table}

In Remark \ref{remar} we discussed that if we set $A$ as
\begin{equation} \label{how}
A = \frac{\sum_{i=1}^{K} p_{i}\,  \delta_{i}}{\alpha},
\end{equation}
where $p=\{p_{i}\}$ is the mixing distribution that defines $T_{A}(p)$, the probability $\Pro_{0}(T_{A}(p)<\infty)$ is expected to be  approximately equal to $\alpha$ for sufficiently small values of $\alpha$. In Table \ref{tab2}, we present the actual probabilities computed using Monte Carlo simulations. An  importance sampling technique was used in these experiments, taking advantage of the representation  $\Pro_{0}(T_{A}< \infty) =\sum_{i}  p_i  \, \Exp_{i}[e^{-Z_{T_{A}}}]$ (see (\ref{com2})). This allowed us to evaluate a very low error probability with a reasonable number of Monte Carlo runs. It is seen that the formula \eqref{how} ensures extremely high accuracy of the approximation of the desired error probability for    all mixing distributions.

\begin{table}[h] 
\renewcommand{\arraystretch}{1.2}
\centering
\caption{Probability $\Pro(T_A(p)<\infty)$ for different mixing distributions: the first column represents the desired error probabilities; the other columns represent the actual error probabilities obtained by Monte Carlo simulations when  the threshold is chosen according to (\ref{how})} 
\vspace{3mm}
	\begin{tabular}{c||c|c|c|c|c} 
		$\alpha$ &  $p^{e^{\kappa} / \delta}$ & $p^{0}$ & $p^{KL}$ & $p^{1 / \delta}$ & $p^{u}$ \\
				\hline \hline
		$10^{-1}$ &   $5.9979  \, 10^{-2}$   & $6.7037 \, 10^{-2}$       &  $8.0337 \, 10^{-2}$      &  $ 8.0029 \, 10^{-2}$  &   $8.9314 \,  10^{-2} $  \\
		$10^{-2}$ &   $9.1127  \, 10^{-3}$  &  $9.4317  \, 10^{-3}$       & $9.8754 \, 10^{-3}$      &  $ 9.8885  \, 10^{-3}$  &  $1.0049 \, 10^{-2}$  \\
	  $10^{-4}$ &   $1.0104  \, 10^{-4}$  &  $1.0107 \,  10^{-4}$       & $1.0027 \, 10^{-4}$      &  $ 1.0038 \, 10^{-4}$  &   $1.0011 \, 10^{-4}$  \\
	  $10^{-6}$ &   $1.0017  \, 10^{-6}$  &  $1.0006 \, 10^{-6}$       &  $1.0009 \, 10^{-6}$      &  $ 1.0004 \,  10^{-6}$  &  $1.0008 \, 10^{-6}$   \\
	  $10^{-8}$ &   $1.0008  \, 10^{-8}$ &   $1.0033 \, 10^{-8}$       &  $1.0002 \, 10^{-8}$      &  $ 1.0017 \, 10^{-8}$   &  $1.0006 \, 10^{-8}$   \\
	\end{tabular}
	\label{tab2}
	\end{table}

Table~\ref{tab3} allows us to verify the accuracy of the asymptotic approximation  (\ref{form}) for the Kullback--Leibler information $\max_{i} (I_{i} \Exp_{i}[T_{A}(p)])$ in the  worst-case scenario for optimal mixing distribution $p=p^{0}$ and  uniform mixing distribution $p=p^{u}$.  For optimal mixing distribution $p^{0}$, the asymptotic approximation (\ref{form}) for $\max_{i} (I_{i} \Exp_{i}[T_{A}])$  is very accurate for all studied probabilities of error $\alpha \le 0.01$. However, for uniform mixing distribution, the approximation  (\ref{form})  is considerably less accurate, but improves significantly as  the error probability goes to 0. 

\begin{table}[h] 
\renewcommand{\arraystretch}{1.2}
\centering
\caption{The maximal expected Kullback--Leibler information $\max_{i} (I_{i} \Exp_{i}[T_{A}(p^0)])$ for optimal and uniform mixing distributions $p^0$ and $p^u$. The threshold $A$ is selected according to (\ref{how}).}
\vspace{3mm}
		\begin{tabular}{ccc} 
		(a) Optimal mixing distribution \vspace{3mm} & & (b) Uniform mixing distribution\\ 
		\vspace{5mm}
	\begin{tabular}{c||c|c} 
		$\alpha$ &   Monte Carlo  & Approximation \eqref{form}   \\
				\hline \hline
	  $10^{-1}$ &   4.99   &  4.31  \\	
	  $10^{-2}$ &   6.36  &   6.61  \\
	  $10^{-4}$ &   10.99 &   11.21 \\
	  $10^{-6}$ &   15.65 &   15.82 \\
	  $10^{-8}$ &   20.33 &   20.42 \\
	 \end{tabular} 
	& 
	&
	\begin{tabular}{c||c|c} 
		$\alpha$ &  Monte Carlo & Approximation  \eqref{form}   \\
				\hline \hline
		$10^{-1}$ &  5.04  &  5.52   \\		
		$10^{-2}$ &  6.88  &  7.82   \\
	  $10^{-4}$ &  11.87 &  12.42  \\
	  $10^{-6}$ &  16.59 &  17.03 \\
	  $10^{-8}$ &  21.29 &  21.63  \\
	\end{tabular} 
	\label{tab3}
\end{tabular}
	\end{table}

\section{EXTENSIONS} \label{s:Rami}

Despite the fact that one-sided tests have limited practical applications themselves, they can be  used effectively in the more realistic problems of 
testing two (or more) hypotheses and in changepoint detection problems. Indeed, multi-hypothesis sequential tests and changepoint detection procedures
are typically built based on  combinations of one-sided tests; see, e.g., \citet{Lorden-AS71,Lorden-AS77}, \citet{Tartakovskyetal-IEEEIT03}, and \citet{TartakovskySISP98}. Therefore, the results of the present paper may have certain implications for these more practical problems, some of which we now briefly discuss.

\subsection{Two-Sided Mixture Sequential Tests}

Suppose that we want to stop as soon as possible  not only under $\cP$ but also under $\Pro_{0}$ and either reject $\Hyp_0$ or accept it. Then, a sequential test is a pair $(T,d_{T})$ that  consists of an $\{\cF_{n}\}$-stopping time $T$ and an $\cF_{T}$-measurable random variable $d_{T}$ that takes values in $\{0,1\}$, depending on whether the null or the alternative hypothesis is accepted. When $\cP$ consists  of a single probability measure, say  $\cP=\{\Pro_{i}\}$,  
the optimal test is Wald's two-sided SPRT
\[
\begin{aligned} T_{A,B}^{i} & = \inf \Bigl\{n \geq 1: \Lambda_{n}^{i} \geq A \; \text{or} \; \Lambda_{n}^{i} \leq B \Bigr\} ;  \\ 
d_{T_{A,B}^{i}}&=\begin{cases}  & 1~~ \text{if} ~~ \Lambda^{i}_{T_{A,B}^{i}} \geq A   \\
& 0 ~~ \text{if}~~ \Lambda_{T_{A,B}^{i}}^{i} \leq B  \end{cases} ,
\end{aligned} \]
where $0<B<1< A$ are fixed thresholds. Indeed, as it was shown by \citet{WaldWolfowitz48}, the SPRT attains both 
$$\inf_{(T,d_{T}) \; \in \ccab^{i}} \Exp_0[T] \quad \text{and} \quad \inf_{(T,d_{T}) \; \in \ccab^{i}}  \Exp_{i}[T],$$
where  $\Pro_{0} (d_{T_{A,B}^{i}}=1)=\alpha$ , $\Pro_{1} (d_{T_{A,B}^{i}}=0)=\beta$ and $$\ccab^{i}=\Bigl\{ (T,d): \Pro_{0}(d_{T}=1) \leq \alpha \; \text{and} \; \Pro_{i} (d_{T}=0) \leq \beta \Bigr\}.$$ When the alternative hypothesis consists of a discrete set of probability measures, $\cP=\{\Pro_{1}, \ldots, \Pro_{K}\}$,
a natural generalization of the SPRT  is the two-sided mixture rule 
\[
\begin{aligned}
T_{A,B}=\min \set{T_0(B), T_1(A)} \; , \; d_{T_{A,B}}= \ind{T_1(A)< T_0(B)},
\end{aligned}
\]
where 
\[ 
T_0(B)  =  \inf \Bigl\{n \geq 1: \sum_{i=1}^{K} q_{i} \Lambda_{n}^{i} \leq B \Big\}, \quad  T_1(A)=\inf \Bigl\{n \geq 1: \sum_{i=1}^{K} p_{i} \Lambda_{n}^{i} \geq A \Big\}
\] 
and $\{q_{i}\}$, $\{p_{i}\}$ are mixing distributions. We conjecture that if $\{p_{i}\}$  is chosen according to (\ref{prior}), 
then $(T_{A,B},d_{T_{A,B}})$ is almost minimax, in the sense that it attains 
$$
\inf_{(T,d_{T}) \; \in \ccab}  \; \max_{i=1,\ldots, K} \, (I_{i} \Exp_{i}[T])
$$
up to an $o(1)$ term  as $\alpha |\log \beta|+ \beta |\log \alpha| \rightarrow 0$, where
$\Pro_{0} (d_{T_{A,B}}=1)=\alpha$ and  $\Pro_{1} (d_{T_{A,B}}=0)=\beta$ and 
$$ 
\ccab=\Bigl\{ (T,d_{T}): \Pro_{0}(d_{T}=1) \leq \alpha \; \text{and} \; \max_{i=1,\ldots,K} \;  \Pro_{i} (d_{T}=0) \leq \beta \Bigr\}.$$
However, this statement does not follow directly from our results in this paper. Moreover, it is not clear whether  
$\inf_{(T,d_{T}) \; \in \ccab}  \Exp_{0}[T]$ is attained up to an $o(1)$ term for some particular choice of $\{q_{i}\}$. This open problem will be addressed in the  future.

\subsection{Sequential Changepoint Detection}

Suppose that a change occurs at an unknown time $\nu$ so that the pre-change distribution of the  sequence $\{X_{n}\}$ is  $F_{0}$ 
and the post-change distribution belongs to the set $\{F_{1},\ldots, F_{K}\}$. We denote by $\Pro_{i}^{\nu}$  the probability measure under which the change occurs at time $\nu$ and the post-change distribution is $F_{i}$. If $\nu=\infty$ (there is never a change), then $X_n \sim F_{0}$ for every $n \in \mathbb{N}$, i.e., $\Pro_i^\infty\equiv \Pro_0$. If $\nu=1$ (the change occurs at the very beginning), then $X_n \sim F_i$ for all $n \in \mathbb{N}$, i.e., $\Pro_i^1=\Pro_i$.  The goal is to detect the change as soon as possible after it occurs, avoiding false alarms. Thus, a detection rule is a stopping time $T$, and one attempts to find such $T$ that $(T-\nu)^{+}$ takes small values under every $\Pro_{i}^{\nu}$, but large values under $\Pro_{0}$. 

\citet{Lorden-AS71} showed that there is a close link between change detection rules and one-sided sequential tests. 
Based on this connection, he proved that applying repeatedly the one-sided SPRT, $T_{A}^{i}$, leads to a detection rule
(the so-called CUSUM procedure) that is asymptotically optimal in the sense that it attains to first order
\begin{equation} \label{lor}
\inf_{T: \Exp_{0}[T] \geq A} \cJ_{i}[T],
\end{equation}
where $\cJ_{i}[T]$ is a minimax performance measure that quantifies the delay of the detection rule $T$ when the post-change distribution is $F_{i}$.
Using Lorden's method, it can be easily established that applying repeatedly a mixture-based sequential test $T_{A}$ with $p_{i}>0$ for all $i=1,\ldots, K$ leads to a detection procedure that attains to first order (\ref{lor}) for every $i=1,\ldots, K$. However, the optimal choice of the mixing distribution 
remains an open problem that we plan to consider in the future.

\section{CONCLUSIONS AND FINAL REMARKS} \label{s:Conclu}
The main focus of this paper is on discrete, mixture-based stopping rules for testing a simple null hypothesis against a composite alternative hypothesis. 
These rules arise naturally in important practical problems, such as the multi-sample slippage problem, where the statistician has to decide whether
one of the populations has ``slipped to the right of the rest'', without specifying which one. Discrete mixture rules are also useful when the 
alternative hypothesis  is continuous, since they have certain important advantages over their continuous counterparts. More specifically, they asymptotically minimize  the expected sample size within a constant (not only to first-order)  at all parameter values used for their design 
(but they are asymptotically suboptimal outside of these points). However, the most important advantage of discrete mixtures is that they are easily implementable, which is not usually the case with continuous mixture rules. 

The main contribution of this paper consists in finding an optimal mixing distribution both for discrete and continuous mixture rules. That is, for both cases, we find mixing distributions so that the resulting sequential tests are nearly minimax, in the sense that they minimize the maximal Kullback--Leibler information within a negligible term $o(1)$. We believe that the  methods of the present paper can be effectively used in the more practical problems of sequential testing two or more composite hypotheses and constructing nearly optimal mixture-based change-point detection procedures.

\section*{APPENDIX: PROOF OF LEMMA 2.5}

\renewcommand{\theequation}{A.\arabic{equation}}
\setcounter{equation}{0}

We need to find a $c^{*}$  such that $\cR_c(T) \geq \cR_{c}(T_{A_{Qc}})$ for every stopping time $T$ and for every $c$ smaller than
$\pi c^{*}$, or equivalently, for every $c$ that satisfies the inequality $Qc< \pi Qc^{*}$. Since $A_{Qc}$ is defined so that $Qc  < \pi$, it is clear that $c^{*}$ must be chosen so that $Q c^{*} <1$.

Recalling that $\pi=\Prop^\pi(\Hyp_0)$ is the prior probability of the null hypothesis $\Hyp_0$ as well as the definitions of the probability measure $\Prop^{\pi}$ and the posterior process $\{\Pi_{n}\}_{n\ge 1}$, for any stopping time $T$, we have
\begin{equation} \label{error}
\pi \,  \Pro_{0}(T < \infty) =  \sum_{n=1}^{\infty} \Prop^{\pi}( T=n,  \theta=0)
= \sum_{n=1}^{\infty} \Expop^{\pi}[ \ind{T=n} \,  \Pi_{n}] =  \Expop^{\pi}[ \Pi_{T}  \ind{T < \infty}]
\end{equation}
and
$$
c \, (1-\pi) \,  \sum_{i=1}^{K} p_{i} \, (I_{i} \, \Exp_{i}[T])
\geq c \; (\min_{1\leq i \leq K} I_{i}) \;  (1-\pi) \,  \sum_{i=1}^{K} p_{i} \, \Exp_{i}[T]
= c \; (\min_{1 \leq i \leq K} I_{i}) \;  \Expop^{\pi}[T] .
$$
Therefore,  
$$
\cR_{c}(T) \geq \Expop^{\pi}[ \Pi_{T}  \ind{T < \infty} + c \; ( \min_{1 \leq i \leq K} I_{i} ) \, T].
$$
From this inequality it is clear that without any loss of generality we can restrict ourselves  to $\Prop^{\pi}$-a.s.  finite stopping times. Since the process $\{\Pi_{n}\}_{n\ge 0}$ is a bounded martingale  with $\Pi_{0}=\pi$, we conclude that
$\cR_{c}(T) \geq \Expop^{\pi}[ \Pi_{T} ] =\pi$   for every  $\Prop^{\pi}$-a.s.\  finite stopping time $T$.
Hence, it suffices to find $c^{*}$ with $Q c^{*}<1$ such that for every $c \leq \pi \, c^{*}$
$$ 
\pi \geq \cR_{c}(T_{A_{Qc}}) = \pi \, \Pro_{0}(T_{A_{Qc}}<\infty) +  c \, (1-\pi) \, \sum_{i=1}^{K} p_{i} \, (I_{i} \, \Exp_{i}[T_{A_{Qc}}]).
$$
From (\ref{repre})  and (\ref{error}) it follows that
\begin{equation} \label{err}
\pi \, \Pro_{0}(T_{A_{Qc}}<\infty)= \Expop^{\pi}[\Pi_{T_{A_{Qc}}}] \leq Qc.
\end{equation}
Therefore, we must find $c^{*}$ with  $Q c^{*}  \leq 1$ such that for every $c \leq \pi c^{*}$
\begin{equation} \label{err2}
 Q \, c +  c \, (1-\pi) \, \sum_{i=1}^{K} p_{i} \, (I_{i} \, \Exp_{i}[T_{A_{Qc}}]) \leq \pi
\Longleftrightarrow   (1-\pi) \, \sum_{i=1}^{K} p_{i} \, (I_{i} \, \Exp_{i}[T_{A_{Qc}}]) \leq \frac{\pi}{c} -Q .
\end{equation}
However,  from (\ref{perf10}) it follows that there exists a constant $C>0$, which does not depend on $i$ and $A$,  such that
$I_{i} \, \Exp_{i}[T_{A}] \leq \log A + C$ for any mixture rule $T_{A}$. Therefore,
\begin{align*}
(1-\pi) \, \sum_{i=1}^{K} p_{i} \, (I_{i} \, \Exp_{i}[T_{A_{Qc}}])
&\leq  (1-\pi) [ \log A_{Qc} + C ] \nonumber \\
&=  (1-\pi) \Bigl[ \log \Bigl( \frac{1-Qc}{Qc}  \frac{\pi}{1-\pi}  \Bigr)+   C  \Bigr] \\
&\leq 
(1-\pi) \Bigl[ \log \Bigl(\frac{\pi}{Qc}\Bigr)  + \log \Bigl(\frac{1}{1-\pi}\Bigr) +   C  \Bigr] \\
&\leq    \log \Bigl( \frac{\pi}{Qc} \Bigr) +  (1-\pi)  \log \Bigl( \frac{1}{1-\pi} \Bigr) +   C  \\
&=   \frac{\pi}{Qc}   \Bigl[ \frac{Qc}{\pi}  \log \Bigl( \frac{\pi}{Qc} \Bigl) \Bigr] +  \Bigl[(1-\pi)  \log \Bigl( \frac{1}{1-\pi} \Bigr) \Bigr]+C .
\end{align*}
Since also $Qc \leq \pi$, from the inequality $\sup_{0 < x <1} \Bigl( x |\log x| \Bigr) \leq e^{-1}$ we have
\begin{equation} \label{err3}
(1-\pi) \, \sum_{i=1}^{K} p_{i} \, (I_{i} \, \Exp_{i}[T_{A_{Qc}}])
\leq   \frac{\pi}{Qc}  \frac{1}{e} +  \frac{1}{e} +   C
=  \frac{\pi}{c} -  \frac{\pi}{c}  \, \frac{Qe-1}{Q e} \,  +  \frac{1}{e} +  C .
\end{equation}
Hence, from (\ref{err2}) and (\ref{err3}) it follows that it suffices to find $c^{*}$ with $Q c^{*}<1$  such that for $c \leq \pi  \, c^{*}$
\begin{align*}
\frac{\pi}{c} -  \frac{\pi}{c}  \, \frac{Qe-1}{Q e} \,  + e^{-1} +   C  \leq \frac{\pi}{c}- Q
&\Longleftrightarrow    \frac{\pi}{c}  \, \frac{Qe-1}{Q e} \geq   e^{-1} +  Q+ C  \\
&\Longleftrightarrow    \frac{c}{\pi} \leq  \frac{Qe-1}{Q e}  \frac{1} {e^{-1} +  Q+ C} .
\end{align*}
Thus, it suffices to set
$$
c^{*}=  \frac{Qe-1}{Q e}  \frac{1} {e^{-1} +   Q + C } ,
$$
and this is a valid choice since
$$ 
Q c^{*} \leq \, \frac{Qe-1}{Qe} \, \frac{Q}{e^{-1}+   Q + C}  < 1 .
$$
The proof is complete.

\section*{ACKNOWLEDGEMENTS}
We would like to thank Nitis Mukhopadhyay for inviting us to submit a paper to this special issue. We are also grateful to Moshe Pollak for useful discussions.

This work was supported by the U.S.\ Army Research Office under MURI grant  W911NF-06-1-0044, by the U.S.\ Air Force Office of Scientific Research under MURI grant FA9550-10-1-0569, by the U.S.\ Defense Threat Reduction Agency under grant HDTRA1-10-1-0086, and by the U.S.\ National Science Foundation under grants CCF-0830419 and EFRI-1025043 at the University of Southern California, Department of Mathematics.


\begin{thebibliography}{99}

\setlength{\parskip}{-1.2ex plus0ex minus1.1ex}


\bibitem[Anscombe(1952)]{Ans1952} Anscombe, F.J. (1952). Large-Sample Theory of Sequential Estimation, \textit{Proc. Cambridge Philos. Soc.} 48: 600-607.


\bibitem[Chow et al.(1971)]{ChowRobbinsSiegmund-book71} \text{Chow, Y.S. , Robbins, H. and Siegmund, D.} (1971). \textit{Great Expectations: The Theory of Optimal Stopping}, Boston:Houghton Mifflin.

\bibitem[Darling and Robbins(1968)]{DarlingRobbins-NAS68} \text{ Darling, D. and  Robbins, H.}(1968). Some Further Remarks on Inequalities for Sample Sums, \textit{Proceedings of the National Academy of Science of the U.S.A.} 60: 1175--1182.


\bibitem[Dragalin and Novikov(1999)]{DragNov} Dragalin, V. and Novikov, A. (1999). Adaptive Sequential Tests for Composite Hypotheses, \textit{Surveys in Applied and Industrial Mathematics, TVP press}, 6: 387-–398.

\bibitem[Dragalin et al.(1999)]{DTVIEEE99}  Dragalin, V. P., Tartakovsky, A. G., and Veeravalli, V. V. (1999). Multihypothesis Sequential Probability Ratio Tests - Part I: Asymptotic Optimality,  \textit{IEEE Transactions on Information Theory} 45: 2448–-2461.

\bibitem[Dragalin et al.(2000)]{dragalin-it00} Dragalin, V. P., Tartakovsky, A. G., and Veeravalli, V. V. (2000). Multihypothesis Sequential Probability Ratio Tests - Part II: Accurate Asymptotic Expansions for the Expected Sample Size, \textit{IEEE Transactions on
Information Theory} 46: 1366–-1343.

\bibitem[Lai(2001)]{Lai-ss01} Lai, T. L. (2001). Sequential Analysis: Some Classical Problems and New Challenges (with Discussion).
\textit{Statistica Sinica} 11: 303–-408.

\bibitem[Lai and Siegmund (1977)]{LaiSieg-77} Lai, T.  L. and Siegmund, D. (1977). A Nonlinear Renewal Theory with Applications to Sequential Analysis~I, \textit{Annals of Statistics} 5: 628--643.

\bibitem[Lai and Siegmund (1979)]{LaiSieg-79} Lai, T.  L. and Siegmund, D. (1979). A Nonlinear Renewal Theory with Applications to Sequential Analysis~II, \textit{Annals of Statistics} 7: 60--76.

\bibitem[Lerche(1986)]{Lerche-AS86} Lerche, H.R. (1986). The Shape of Bayes Tests of Power 1, \textit{Annals of  Statistics} 14: 1030--1048.


\bibitem[Lorden(1967)]{Lorden-AS67} Lorden, G. (1967). Integrated Risk of Asymptotically Bayes Sequential Tests, \textit{Annals of Mathematical  Statistics} 
38: 1399--1422.

\bibitem[Lorden(1971)]{Lorden-AS71} Lorden, G. (1971). Procedures for Reacting to a Change in a Distribution, \textit{Annals of Mathematical Statistics} 42: 1897--1908.

\bibitem[Lorden(1973)]{Lorden-AS73} Lorden, G. (1973). Open-Ended Tests for Koopman--Darmois Families, \textit{Annals of  Statistics} 1: 633--643.

\bibitem[Lorden(1977)]{Lorden-AS77} Lorden, G. (1977). Nearly Optimal Sequential Tests for Finitely Many Parameter Values, \textit{Annals of  Statistics} 5: 1--21.

\bibitem[Lorden and Pollak(2005)]{LordenPollak-AS05} \text{Lorden, G. and Pollak, M.} (2005). Nonanticipating Estimation Applied to Sequential Analysis and Changepoint Detection, \textit{Annals of  Statistics} 3: 1422--1454.


\bibitem[Pavlov(1990)]{Pavlov-TPA90} Pavlov, I. V. (1990). A Sequential Procedure for Testing Composite Hypotheses with Application to the Kiefer--Weiss Problem,  \textit{Theory of Probability and Its Applications} 35: 280–-292.

\bibitem[Pollak(1978)]{Pollak-AS78}  Pollak, M. (1978). Optimality and Almost Optimality of Mixture Stopping Rules, \textit{Annals of Statistics} 6: 910--916.

\bibitem[Pollak and Siegmund(1975)]{PollakSiegmund-AS75} Pollak, M. and Siegmund, D. (1975). Approximations to the Expected  Sample Size of Certain Sequential Tests, \textit{Annals of  Statistics} 3: 1267--1282.

\bibitem[Pollak(1986)]{Pollak-AS86} Pollak, M. (1986). On the Asymptotic Formula for  the Probability of a Type I Error of Mixture-Type Power One Tests, \textit{Annals of  Statistics} 14: 1012--1029.


\bibitem[Pollak and Yakir(1999)]{PollakYakir-SQA99} Pollak, M. and Yakir, B. (1999). A Simple Comparison of Mixture vs.  Nonanticipating Estimation, \textit{Sequential Analysis} 18: 157--164.

\bibitem[Robbins(1970)]{Robbins-AMS70} Robbins, H.(1970). Statistical Methods Related to the Law of the Iterated  Logarithm, 
\textit{Annals of Mathematical Statistics} 41: 1397--1409.

\bibitem[Robbins and Siegmund(1970)]{RobbinsSiegmund-Berkeley70} Robbins, H. and Siegmund, D. (1970).
A Class of Stopping Rules for Testing Parameter Hypotheses, in
{\em Proceedings of the Sixth Berkeley Symposium on
Mathematical Statistics and Probability}, Le Cam, L. M., Neyman, J., and Scott, E. L., editors, June 21--July 18, 1970, vol. 4: Biology and Health, pp. 37--41, Berkeley: University of California Press.

\bibitem[Robbins and Siegmund(1974)]{RobbinsSiegmund-AS74} Robbins, H. and Siegmund, D. (1974). The Expected Sample Size of Some Tests of Power One,  \textit{Annals of Mathematical Statistics}  2: 415--436.

\bibitem[Tartakovsky(1998)]{TartakovskySISP98} Tartakovsky, A. G. (1998). Asymptotic Optimality of Certain Multihypothesis Sequential Tests: Non-i.i.d.\ Case, \textit{Statistical Inference for Stochastic Processes} 1: 265--295.


\bibitem[Tartakovsky et al.(2003)]{Tartakovskyetal-IEEEIT03} Tartakovsky, A. G., Li, X. R., and Yaralov, G. (2003). Sequential Detection of Targets in Multichannel Systems,  \textit{IEEE Transactions on Information Theory}  49: 425--445.

\bibitem[Tartakovsky et al.(2006a)]{Tartakovskyetal-SM06} Tartakovsky, A. G., Rozovskii, B. L., Bla\'zek, R. B., and Kim, H. (2006a). Detection of Intrusions in Information Systems by Sequential Change-Point Methods, \textit{Statistical Methodology} 3: 252--293.

\bibitem[Tartakovsky et al.(2006b)]{Tartakovskyetal-IEEESP06} Tartakovsky, A. G., Rozovskii, B. L., Bla\'zek, R. B., and Kim, H. (2006b). A Novel Approach to Detection of Intrusions in Computer Networks via Adaptive Sequential and Batch-Sequential Change-Point Detection Methods,  \textit{IEEE Transactions on Signal Processing} 54: 3372--3382.

\bibitem[Tartakovsky and Veeravalli(2004)]{TarVeerASM2004} Tartakovsky, A. G. and Veeravalli, V. V. (2004). Change-point Detection in Multichannel and Distributed Systems, in {\em Applied Sequential Methodologies:
Real-World Examples with Data Analysis}, Mukhopadhyay, N., Datta, S., and Chattopadhyay, S., editors, vol. 173 of Statistics: a Series of Textbooks and Monographs, pp. 339--370, New York: Marcel Dekker.

\bibitem[Wald and Wolfowitz(1948)]{WaldWolfowitz48} Wald, A. and Wolfowitz, J. (1948). Optimum Character of the Sequential Probability Ratio Test,  \textit{Annals of Mathematical Statistics}  19: 326--339.

\bibitem[Woodroofe(1982)]{Woodroofe-book82} Woodroofe, M. (1982). \textit{Nonlinear Renewal Theory in Sequential Analysis}, Philadelphia: SIAM.

\end{thebibliography}
\end{document}